\documentclass[11pt,reqno]{amsart}

\usepackage{amsmath,amssymb,amsthm,mathtools}
\usepackage[margin=1.1in]{geometry}
\usepackage{graphicx}
\usepackage{placeins}
\usepackage{hyperref}
\usepackage[capitalize,noabbrev]{cleveref}
\usepackage{enumitem}

\hypersetup{colorlinks=true,linkcolor=blue,citecolor=blue,urlcolor=blue}

\newtheorem{theorem}{Theorem}[section]
\newtheorem{proposition}[theorem]{Proposition}
\newtheorem{corollary}[theorem]{Corollary}
\newtheorem{lemma}[theorem]{Lemma}

\theoremstyle{definition}
\newtheorem{definition}[theorem]{Definition}
\newtheorem{example}[theorem]{Example}
\newtheorem{problem}[theorem]{Problem}
\theoremstyle{remark}
\newtheorem{remark}[theorem]{Remark}

\newcommand{\C}{\mathbb C}
\newcommand{\R}{\mathbb R}
\newcommand{\CP}{\mathbb {CP}}
\newcommand{\RP}{\mathbb {RP}}
\newcommand{\I}{\mathfrak I}

\renewcommand{\Im}{\operatorname{Im}}

\title[Inflection curves of rational vector fields]{Inflection curves of rational vector fields}

\author[B. Shapiro]{Boris Shapiro}
\address{Department of Mathematics, Stockholm University, SE-106 91 Stockholm, Sweden}
\email{shapiro@math.su.se}

\author[G. Tahar]{Guillaume Tahar}
\address{Beijing Institute of Mathematical Sciences and Applications, Huairou District, Beijing, China}
\email{guillaume.tahar@bimsa.cn}

\date{\today}

\subjclass[2020]{Primary 37F75, 34M45; Secondary 14P25, 30F99, 58K05}
\keywords{rational vector field, inflection curve, real algebraic curve, separatrix, rational map, dessin d'enfant, exact differential}

\begin{document}

\begin{abstract}
We initiate the study of inflection curves of rational vector fields on the Riemann sphere.  For a rational vector field
\[
        v_R=-R(z)\frac{\partial}{\partial z},
        \qquad R(z)=\frac{Q(z)}{P(z)},
\]
we define its affine regular inflection locus by
\[
        \{z\in \C: R(z)\ne0,\ P(z)\ne0,\ \Im R'(z)=0\}
\]
and its algebraic inflection curve by the closure of this locus, equivalently by
\[
        \I_R=(R')^{-1}(\RP^1).
\]
We prove an explicit defining equation, degree bounds, local normal forms near poles, the asymptotic directions at infinity, and a maximum-principle obstruction excluding compact components without poles.  We also explain that these curves are precisely the real dessins associated with exact rational differentials, i.e. rational differentials with zero residues.  Finally, we give a reducibility criterion for the complexification, prove a generic irreducibility statement in the usual separated-variable sense, and classify the exact dessins of degree at most two.  The remaining low-degree degree-three problem is reduced to three explicit normal forms.
\end{abstract}

\maketitle

\section{Introduction}

Let
\[
        R(z)=\frac{Q(z)}{P(z)}
\]
be a nonconstant rational function, where $P,Q\in\C[z]$ are coprime.  We consider the associated rational vector field
\[
        v_R=-R(z)\frac{\partial}{\partial z}
\]
on $\CP^1$.  Away from the zeros and poles of $R$, its real trajectories are ordinary plane curves.  It is natural to ask for the locus where these trajectories have inflection points.

The elementary observation underlying the paper is that this locus is governed by the rational derivative $R'$:
\[
        \Im R'(z)=0.
\]
Thus the inflection curve is both a real algebraic plane curve and the inverse image of the real projective line under the rational map $R'$:
\[
        \I_R=(R')^{-1}(\RP^1).
\]
This makes the object accessible from two complementary viewpoints: local differential geometry of vector-field trajectories and the topology of real dessins.

The study of meromorphic and rational vector fields on the Riemann sphere has a substantial literature; see, for example, \cite{HajekI,HajekII,Benzinger,NeedhamKing,BrannerDias,DiasGarijo,Klimes,Fiedler}.  For Newton graphs and related separatrix combinatorics see \cite{STW}; for dessins and quadratic differentials see \cite{LandoZvonkin,Strebel}.  Wronskians and real Wronski maps enter through the algebraic equation below; see \cite{EG}.  The present paper grew out of considerations appearing in \cite{AHS}, where rational vector fields occur in connection with invariant sets of first-order differential operators.

Our main results are as follows.
\begin{enumerate}[label=(\roman*)]
\item We derive the defining equation
\[
        \frac{W(z)\overline{P(z)}^2-\overline{W(z)}P(z)^2}{2i}=0,
        \qquad W=Q'P-QP',
\]
and obtain degree bounds (Corollary~\ref{cor:degree-bound}).
\item We prove the local pole model: if $R$ has a pole of order $s$ at $z_0$, then $\I_R$ has $s+1$ smooth analytic branches through $z_0$, tangent to the separatrix directions of $v_R$ (Theorem~\ref{thm:local-pole}).
\item We determine the asymptotic directions at infinity and prove that every bounded connected component of the affine curve contains a pole of $R$. In particular, polynomial vector fields have no bounded affine inflection components (Theorem~\ref{thm:asymptotic}).
\item We show that the inflection curves of rational vector fields are precisely the real dessins of exact rational differentials (Corollary~\ref{cor:exact-dessins}).
\item We give a separated-variable irreducibility criterion for the complexification of $\I_R$ (Corollary~\ref{cor:generic-irreducible}) and solve the low-degree problem for $\deg R'\le2$ (Section~\ref{sec:LowDegree}).
\end{enumerate}

\section{Inflection points of analytic vector fields}

We first recall the elementary differential-geometric calculation behind the definition.

Let $W=(u,v)$ be a smooth real vector field in a domain of $\R^2$.  A regular point $(x,y)$, i.e. a point where $W(x,y)\ne0$, is an inflection point of the trajectory through $(x,y)$ precisely when the acceleration vector is parallel to the velocity vector.  Equivalently,
\begin{equation}\label{eq:general-inflection}
        u\,(u v_x+v v_y)-v\,(u u_x+v u_y)=0.
\end{equation}

\begin{lemma}\label{lem:analytic-inflection}
Let $R$ be holomorphic in a domain $U\subset\C$, and write $R=u+iv$.  At every point of $U$ where $R(z)\ne0$, the trajectory of the vector field $R(z)\partial_z$ has an inflection if and only if
\[
        \Im R'(z)=0.
\]
The same condition holds for the vector field $-R(z)\partial_z$.
\end{lemma}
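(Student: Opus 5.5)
We plan a direct complex-coordinate computation of the acceleration along a trajectory, so the real form \eqref{eq:general-inflection} reduces to a statement about $R'$. Identify the real vector field $R(z)\partial_z$ with the planar field $(u,v)$, where $R=u+iv$; its integral curves solve $\dot z=R(z)$. First check that this identification is the right one: the field $R\,\partial_z+\overline R\,\partial_{\bar z}$ acts on $x$ and $y$ by $u$ and $v$. Then differentiate along a trajectory. Since $R$ is holomorphic, the chain rule gives
\[
        \ddot z=R'(z)\,\dot z=R'(z)R(z).
\]
So the acceleration is the complex number $R'R$ and the velocity is $R$.

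Next, express the condition in \eqref{eq:general-inflection} in complex form. Two planar vectors $a,b$ represented by complex numbers are parallel exactly when $\Im(\bar a\,b)=0$. In fact $\Im(\bar a b)$ is the determinant $a_1b_2-a_2b_1$, and the left-hand side of \eqref{eq:general-inflection} is exactly $\det(W,\dot W)$ with $\dot W=(uu_x+vu_y,\,uv_x+vv_y)$. Applying this with $a=R$ and $b=R'R$ gives
\[
        \det(W,\dot W)=\Im\bigl(\overline{R}\,R'R\bigr)=|R|^2\,\Im R'(z).
\]
At a regular point $R(z)\ne0$ we have $|R|^2>0$. Hence the inflection condition holds if and only if $\Im R'(z)=0$.

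As a sanity check, one can confirm the identity directly in real coordinates using the Cauchy--Riemann equations $u_x=v_y$, $u_y=-v_x$ and $R'=u_x+iv_x$. Expanding gives
\[
        u(uv_x+vv_y)-v(uu_x+vu_y)=u^2v_x+uvu_x-uvu_x+v^2v_x=(u^2+v^2)v_x .
\]
This agrees with the complex computation, since $v_x=\Im R'$.

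For the field $-R\,\partial_z$, replace $R$ by $-R$. Then the velocity is $-R$, and the acceleration is $(-R)'(-R)=R'R$, which is unchanged. The determinant is again $\Im(\overline{-R}\cdot R'R)$ up to sign, namely $-|R|^2\Im R'$, so the same criterion holds. Alternatively, reversing time does not change the trajectories as point sets, and so it does not change their inflection points.

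The only step needing care is the identification of the holomorphic field with the planar field $(u,v)$ and the sign and parallelism convention; the rest is routine.
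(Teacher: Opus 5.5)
Your proposal is correct and follows the paper's proof essentially line for line. Both arguments take velocity $R$ and acceleration $R'R$, apply the parallelism test $\Im(\overline{a}\,b)=0$, and obtain $\Im(\overline{R}R'R)=|R|^2\Im R'(z)$; the case $-R\,\partial_z$ is handled by the substitution $R\mapsto -R$. Your two extra checks are not in the paper but are correct: the identification of $R\,\partial_z+\overline{R}\,\partial_{\bar z}$ with the planar field $(u,v)$, and the Cauchy--Riemann expansion giving $(u^2+v^2)v_x$, together confirm that the complex computation matches \eqref{eq:general-inflection}.
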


\begin{proof}
In complex notation the velocity is $R(z)$, while the acceleration along a trajectory is
\[
        \frac{d}{dt}R(z(t))=R'(z(t))R(z(t)).
\]
The two real vectors represented by complex numbers $a$ and $b$ are parallel if and only if $\Im(\overline a b)=0$.  Hence the inflection condition is
\[
        \Im\bigl(\overline{R(z)}R'(z)R(z)\bigr)=|R(z)|^2\Im R'(z)=0.
\]
Since $R(z)\ne0$, this is equivalent to $\Im R'(z)=0$.  Replacing $R$ by $-R$ does not change the condition.
\end{proof}

\begin{definition}\label{def:inflection-curve}
Let $R$ be rational.  The \emph{regular affine inflection locus} is
\[
        \I_R^{\rm reg}=\{z\in\C: R(z)\ne0,\ P(z)\ne0,\ \Im R'(z)=0\}.
\]
The \emph{affine algebraic inflection curve} $\I_R$ is the closure of this locus in $\C$ after clearing denominators.  Its projective closure in $\RP^2$ will be denoted by $\overline{\I}_R$.
\end{definition}

Zeros of $R$ are singular points of the vector field.  They are not relevant for the curvature of regular trajectories, but they may lie on $\I_R$ if $\Im R'(z)=0$.  Poles of $R$, by contrast, are part of the natural algebraic closure after clearing denominators and give distinguished singularities of $\I_R$.

\section{Algebraic equation and degree}\label{sec:Degree}

Let
\[
        R(z)=\frac{Q(z)}{P(z)},
        \qquad (P,Q)=1,
\]
and set
\[
        W(z)=Q'(z)P(z)-Q(z)P'(z).
\]
Then
\[
        R'(z)=\frac{W(z)}{P(z)^2}.
\]

\begin{proposition}\label{prop:equation}
The affine inflection curve $\I_R$ is defined by the real polynomial equation
\begin{equation}\label{eq:defining-polynomial}
        F_R(z,\bar z):=\frac{W(z)\overline{P(z)}^{2}-\overline{W(z)}P(z)^2}{2i}=0.
\end{equation}
In real coordinates $z=x+iy$, this is a real polynomial equation.
\end{proposition}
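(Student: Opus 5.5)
The plan is to clear denominators in the condition $\Im R'(z)=0$ from Lemma~\ref{lem:analytic-inflection}, using $R'=W/P^2$. At any point with $P(z)\ne0$, I will multiply $R'(z)$ by the positive real number $|P(z)|^4=P(z)^2\overline{P(z)}^2$. This gives
\[
        |P(z)|^4\,\Im R'(z)=\Im\bigl(W(z)\overline{P(z)}^2\bigr)=\frac{W(z)\overline{P(z)}^2-\overline{W(z)}P(z)^2}{2i}=F_R(z,\bar z).
\]
Hence, on $\{P\ne0\}$, the condition $\Im R'(z)=0$ is equivalent to $F_R=0$. So the regular locus $\I_R^{\rm reg}$ is exactly $\{F_R=0\}\cap\{P\ne0\}\cap\{Q\ne0\}$. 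Since $P$ and $Q$ are coprime, $R(z)\ne 0$ and $Q(z)\ne 0$ are the same condition away from the poles.

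Next I will check that $F_R$ is a real polynomial in $x,y$. Substitute $z=x+iy$ and $\bar z=x-iy$, and write $\overline{P(z)}=\bar P(\bar z)$, where $\bar P$ has the conjugated coefficients. Then $F_R$ is a polynomial in $x$ and $y$. It has the form $(A-\bar A)/(2i)=\Im A$ with $A=W\bar P^2$, so it is fixed by complex conjugation and therefore has real coefficients.

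Finally comes the closure statement, which I expect to be the only delicate point. The affine algebraic curve is defined as the closure of $\I_R^{\rm reg}$ "after clearing denominators", so the claim that $\I_R$ is defined by $F_R=0$ should be read as this: $F_R$ is the cleared-denominator equation, and its zero set agrees with $\I_R^{\rm reg}$ off the finite set of zeros and poles of $R$.

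I will handle the exceptional points as follows.
\begin{itemize}
\item At a pole $z_0$, $P(z_0)=0$, so $F_R(z_0)=0$ automatically. Poles therefore lie on $\{F_R=0\}$, which matches the remark in the paper that poles belong to the algebraic closure.
\item At a zero $z_0$ of $Q$, $P(z_0)\ne0$, so $F_R(z_0)=0$ if and only if $\Im R'(z_0)=0$. This is consistent with the remark that zeros may lie on $\I_R$.
\end{itemize}
Since these exceptional sets are finite, $\{F_R=0\}$ differs from $\I_R^{\rm reg}$ only at finitely many points. I will also note that $F_R$ is not identically zero. Otherwise $R'$ would be real on an open set, hence constant, so $R$ would be linear; I will treat that degenerate case separately or exclude it.

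I will not try to prove that every pole is a limit point of $\I_R^{\rm reg}$. That fact belongs to the later local pole model, Theorem~\ref{thm:local-pole}. Here I simply take $\{F_R=0\}$ as the defining equation of the algebraic curve.
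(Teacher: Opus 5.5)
Your proposal is correct and follows essentially the same route as the paper: on $\{P\ne0\}$ you multiply $\Im R'$ by $|P|^4=P^2\overline{P}^2$ to get $\Im\bigl(W\overline{P}^2\bigr)=F_R$, and you use the anti-invariance of $W\overline{P}^2-\overline{W}P^2$ under conjugation to conclude that $F_R$ is a real polynomial in $x,y$. Your extra remarks on poles, on zeros of $R$, and on the degenerate case where $R'$ is a real constant (so $F_R\equiv0$) are refinements that the paper's proof leaves implicit.
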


\begin{proof}
Away from the poles of $R$, the condition $\Im R'(z)=0$ is equivalent to
\[
        \frac{W(z)}{P(z)^2}=\frac{\overline{W(z)}}{\overline{P(z)}^2}.
\]
Multiplication by $P(z)^2\overline{P(z)}^2$ gives \eqref{eq:defining-polynomial}.  The numerator is anti-invariant under complex conjugation, hence division by $2i$ gives a real-valued polynomial.  Since $z$ and $\bar z$ are polynomial expressions in $x,y$, $F_R$ is a real polynomial in $x,y$.
\end{proof}

\begin{corollary}\label{cor:degree-bound}
Let $k=\deg Q$, $\ell=\deg P$, and $n=\deg W$.  Then $\I_R$ is contained in a real algebraic curve of degree at most
\[
        n+2\ell.
\]
For generic pairs $(P,Q)$ with $k\ne \ell$, one has $n=k+\ell-1$, and the degree of $F_R$ is $k+3\ell-1$.  If $k=\ell$, the leading terms cancel in the Wronskian, and generically $n=2\ell-2$, so the corresponding degree is $4\ell-2$.
\end{corollary}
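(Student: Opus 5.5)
The plan is to read the degree off the defining polynomial of Proposition~\ref{prop:equation}. Write $F_R=\Im\bigl(W(z)\overline{P(z)}^2\bigr)$. Substituting $z=x+iy$, $\bar z=x-iy$, any polynomial $A(z)B(\bar z)$ with $\deg A=a$, $\deg B=b$ becomes a polynomial in $(x,y)$ of total degree at most $a+b$. Applying this with $A=W$ and $B=\bar P^2$, and likewise to the conjugate term, shows that $\deg F_R\le n+2\ell$. Proposition~\ref{prop:equation} gives $\I_R\subset\{F_R=0\}$, so the first claim follows.

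For the sharpness statements, I would compute $n$ first. Write $Q=a z^k+\dots$ and $P=b z^\ell+\dots$. The top coefficient of $W=Q'P-QP'$ is $(k-\ell)ab\,z^{k+\ell-1}$, which is nonzero when $k\ne\ell$, so $n=k+\ell-1$ in that case. When $k=\ell$ this coefficient vanishes. The next coefficient is the one of $z^{2\ell-2}$. A direct expansion gives it as $(\text{const})\cdot(a_{\ell}b_{\ell-1}-a_{\ell-1}b_{\ell})$ with constant $1$, because the $z^{2\ell-2}$ term of the Wronskian of $z^\ell$ and $z^{\ell-1}$ is $\pm z^{2\ell-2}$ and the remaining contributions cancel pairwise. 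This is a nonzero polynomial in the coefficients, so generically $n=2\ell-2$.

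It remains to show that the degree bound $n+2\ell$ is attained. The top-degree homogeneous part of $F_R$ in $(x,y)$ is
\[
\frac{c\,z^{n}\bar z^{2\ell}-\bar c\,\bar z^{n}z^{2\ell}}{2i},
\qquad c=w_n\bar b^{2},
\]
where $w_n$ is the leading coefficient of $W$. I expect the main obstacle to be checking that this form does not vanish identically. In polar form it equals $|c|\,r^{n+2\ell}\sin\bigl(\arg c+(n-2\ell)\theta\bigr)$. This is nonzero as a function of $\theta$ whenever $n\ne2\ell$. If $n=2\ell$, it is nonzero exactly when $c\notin\R$, which is still a generic condition since $\arg c$ varies freely with the coefficients.

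Substituting the values of $n$ gives the stated degrees:
\begin{itemize}[label={}]
\item for $k\ne\ell$, $\deg F_R=(k+\ell-1)+2\ell=k+3\ell-1$;
\item for $k=\ell$, $\deg F_R=(2\ell-2)+2\ell=4\ell-2$.
\end{itemize}
If $n=2\ell$, which happens for $k=\ell+1$, the degree is attained only under the extra generic condition $c\notin\R$. I would state this caveat as part of "generic".
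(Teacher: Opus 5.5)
Your proposal is correct and follows the paper's route. Like the paper, you bound the $(x,y)$-degree of $W(z)\overline{P(z)}^2$ termwise and read off $n$ from the leading coefficients of $Q'P$ and $QP'$. Your extra check that the top form $\Im\bigl(c\,z^n\bar z^{2\ell}\bigr)$ does not cancel is something the paper's proof omits, and your caveat is accurate: for $k=\ell+1$ the degree $4\ell$ is attained only when $c=a\bar b\,|b|^2\notin\R$. In your notation this means the limit $a/b$ of $R'$ at infinity is non-real, which is exactly the boundedness condition in Theorem~\ref{thm:asymptotic}.
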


\begin{proof}
The degree of $W(z)\overline{P(z)}^2$ as a polynomial in $x,y$ is at most $n+2\ell$, and the same is true for its conjugate.  The generic degree of $W$ follows by comparing the leading terms of $Q'P$ and $QP'$.
\end{proof}

\begin{remark}\label{rem:poles-in-equation}
The equation \eqref{eq:defining-polynomial} includes the poles of $R$.  Thus it is better regarded as the natural algebraic closure of the regular inflection locus.  This convention is useful because poles are exactly the points where the curve has the distinguished singularities described in the next section.
\end{remark}

\section{Local structure near poles and ordinary singularities}

\begin{theorem}\label{thm:local-pole}
Suppose that $z_0$ is a pole of $R$ of order $s\ge1$.  Write
\[
        R(z)=\frac{a}{(z-z_0)^s}+O((z-z_0)^{-s+1}),
        \qquad a\ne0.
\]
Then near $z_0$, the inflection curve $\I_R$ has $s+1$ smooth real analytic branches through $z_0$, with tangent rays determined by
\begin{equation}\label{eq:pole-directions}
        \arg(z-z_0)=\frac{\arg(-sa)+m\pi}{s+1},
        \qquad m=0,1,\ldots,2s+1.
\end{equation}
Opposite rays in \eqref{eq:pole-directions} belong to the same analytic branch.  In particular, if $z_0$ is a simple pole of $R$, then $z_0$ is an ordinary real node of $\I_R$.
\end{theorem}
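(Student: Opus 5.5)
Near $z_0$ the derivative $R'$ has a pole of order $s+1$:
\[
R'(z)=\frac{-sa}{(z-z_0)^{s+1}}\bigl(1+O(z-z_0)\bigr).
\]
The plan is to straighten this pole by a holomorphic change of coordinates. Set $w=z-z_0$ and $c=-sa\neq0$. Then $1/R'(z)=c^{-1}w^{s+1}(1+O(w))$ is holomorphic with a zero of order exactly $s+1$. Choose a local holomorphic $(s+1)$-st root
\[
\zeta=w\,(1+O(w))^{1/(s+1)}.
\]
The map $w\mapsto\zeta$ is a biholomorphism near $0$, with $\zeta'(0)=1$, and it satisfies
\[
R'(z)=\frac{c}{\zeta^{s+1}}.
\]
In the coordinate $\zeta$ we have two alternatives:
\begin{itemize}
\item If $R'(z)\in\R\setminus\{0\}$, then $\Im R'(z)=0$ is equivalent to $\Im\bigl(\bar c\,\zeta^{s+1}\bigr)=0$, using $1/R'=\zeta^{s+1}/c$ and $\bar c/|c|^2=1/c$.
\item At $\zeta=0$, the point $z_0$ lies on $\I_R$ by \cref{rem:poles-in-equation}, since $P(z_0)=0$ forces $F_R(z_0)=0$.
\end{itemize}
So, near $z_0$, the set $\I_R$ coincides with the zero set of $\Im(\bar c\,\zeta^{s+1})$.

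This zero set is explicit. It is $\{\zeta:\ \arg\zeta\equiv(\arg c+m\pi)/(s+1)\}$, the union of $s+1$ real lines through $0$ in the $\zeta$-plane. The $2s+2$ rays are indexed by $m=0,\dots,2s+1$, and rays $m$ and $m+s+1$ are opposite, hence lie on the same line. Pulling back by the biholomorphism $\zeta^{-1}$ turns each line into a smooth real-analytic curve through $z_0$. Because $d\zeta/dw=1$ at $0$, its tangent direction is the same angle. This gives exactly formula \eqref{eq:pole-directions} with $\arg c=\arg(-sa)$. Distinct lines stay transverse after the diffeomorphism, so for $s=1$ we get two transverse smooth branches, which is an ordinary node.

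One point needs care: that $\I_R$, defined as the closure of $\I_R^{\rm reg}$ (or as the zero set of $F_R$), really equals this local picture. Three checks are needed:
\begin{itemize}
\item Near $z_0$ we have $P\neq0$ away from $z_0$, so $F_R=0$ is equivalent to $\Im R'=0$ there.
\item Zeros of $R$ removed in $\I_R^{\rm reg}$ do not accumulate at $z_0$, since $R\to\infty$ there.
\item Each punctured line accumulates at $0$, so the closure adds exactly $z_0$.
\end{itemize}
I expect the main obstacle to be only this bookkeeping about the closure and the sign and argument conventions. The root extraction itself is routine.

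The claim that the tangents match the separatrix directions of $v_R$ belongs to the introduction and is not needed here. If desired, it can be added by noting that $v_R$ near the pole has the model $-a w^{-s}\partial_w$, whose separatrix directions satisfy $-a w^{-s-1}\in\R$. This is the same condition $\Im(\bar c\,w^{s+1})=0$ up to the positive factor $s$.
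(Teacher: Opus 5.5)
Your proof is correct, but it takes a different route from the paper's.

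The paper works with the tangent cone. It multiplies $\Im R'$ by $|w|^{2s+2}$ and reads off the leading homogeneous form $\Im\bigl((-sa)\overline w^{s+1}\bigr)$. It then appeals to the ``standard'' fact that a real-analytic plane germ whose tangent cone consists of distinct real lines splits into that many smooth branches tangent to them. (Its phrase ``$2s+2$ distinct real linear factors'' should read $s+1$ factors, i.e.\ $2s+2$ rays.)

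You instead use the holomorphy of $R'$ to straighten the pole exactly. The coordinate $\zeta$ with $R'=c\,\zeta^{-(s+1)}$ conjugates the germ of $\I_R$ at $z_0$ biholomorphically to a union of $s+1$ lines. This gives several things at once:
\begin{enumerate}[label=(\roman*)]
\item smoothness and real-analyticity of the branches, together with their pairwise transversality;
\item the tangent directions, via $\zeta'(0)=1$;
\item a full conformal normal form of the germ, which is stronger than tangent data alone;
\item the local dessin picture, since $\I_R$ near $z_0$ is the preimage of $\RP^1$ under $\zeta\mapsto c\,\zeta^{-(s+1)}$.
\end{enumerate}
None of this requires the factorization lemma. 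You also carry out the closure and regular-locus bookkeeping, which the paper leaves implicit.

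The paper's tangent-cone argument is more general: it applies to any real-analytic function with a reduced real tangent cone, not only to imaginary parts of meromorphic functions. In exchange it rests on an unproved lemma.

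One wording fix: state your first bullet for $\zeta\neq0$ rather than under the hypothesis $R'(z)\in\R\setminus\{0\}$. Near $z_0$ one automatically has $R'\neq0$, so the equivalence you need is $\Im R'=0\iff \zeta^{s+1}/c\in\R\iff\Im(\bar c\,\zeta^{s+1})=0$.
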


\begin{proof}
Put $w=z-z_0$.  Then
\[
        R'(z)=-sa w^{-s-1}+O(w^{-s}).
\]
The equation $\Im R'(z)=0$ is equivalent, after multiplication by $|w|^{2s+2}$, to
\[
        \Im\bigl((-sa)\overline w^{s+1}+O(|w|^{s+2})\bigr)=0.
\]
The leading homogeneous part is $\Im((-sa)\overline w^{s+1})$.  Writing $w=re^{i\theta}$, its zero set is
\[
        \arg(-sa)-(s+1)\theta\in \pi\mathbb Z,
\]
which is equivalent to \eqref{eq:pole-directions}.  The leading homogeneous polynomial has $2s+2$ distinct real linear factors.  The standard real-analytic factorization of plane curve germs with distinct tangent cone therefore gives $s+1$ smooth analytic branches.
\end{proof}

\begin{proposition}\label{prop:singularities}
Away from the poles of $R$, singular points of the real analytic curve $\I_R$ can occur only at points $z$ satisfying
\[
        R''(z)=0,
        \qquad
        \Im R'(z)=0.
\]
Consequently, for a generic rational function $R$, no such singularities occur away from the poles.
\end{proposition}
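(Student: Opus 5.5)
Away from the poles of $R$, the function $g(z)=\Im R'(z)$ is real analytic, and $\I_R$ is locally its zero set, since the polynomial $F_R$ of Proposition~\ref{prop:equation} equals $|P|^4\,\Im R'$ and $|P|^4$ is a nonvanishing factor there. I will apply the implicit function theorem to $g$. A point $z\in\I_R$ with $P(z)\neq0$ is a regular point of the curve whenever $\nabla g(z)\neq0$. Only the regular-locus version is needed, because multiplying by the nonvanishing factor $|P|^4$ does not change whether the gradient vanishes at a zero.

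The main computation is the gradient of the imaginary part of the holomorphic function $f=R'$, using the Cauchy--Riemann equations. Write $f=U+iV$. Then $V_x=\Im f'$ and $V_y=\Re f'$, so
\[
        |\nabla V|=|f'|=|R''(z)|.
\]
Hence $\nabla g(z)=0$ holds exactly when $R''(z)=0$. So every singular point of $\I_R$ off the poles satisfies both $R''(z)=0$ and $\Im R'(z)=0$. This proves the first claim, and it matches the equivalence with the polynomial formulation $F_R$.

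For the genericity statement, note that the singular points lie among the finitely many zeros of $R''$. These are zeros of $W'P-2WP'$ away from poles, and $R''\not\equiv0$ because $R$ is nonconstant and not a polynomial of degree at most one. In the degenerate case $R'$ is constant, which must be handled separately. For each zero $z_j$ of $R''$ we need $\Im R'(z_j)\neq0$, i.e. the critical values of $R'$ are non-real. The condition $\Im R'(z_j)=0$ for some $j$ is a proper real-algebraic (closed, measure-zero) condition on the coefficients of $(P,Q)$. To see it is proper, replace $R$ by $R+cz$ with $c\in\C$. This adds $c$ to $R'$ and leaves $R''$, and hence the $z_j$, unchanged, so choosing $\Im c$ away from the finitely many values $-\Im R'(z_j)$ removes all such points.

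The main obstacle is making the genericity precise in the paper's parameter space of coprime pairs $(P,Q)$ of fixed degrees. The perturbation $R\mapsto R+cz$ changes $\deg Q$. Within a fixed-degree family I would instead argue by contradiction. The bad set is the projection of the real-algebraic set
\[
        \{(R,z): R''(z)=0,\ \Im R'(z)=0\}.
\]
On a dense open subset the critical points $z_j$ of $R'$ move holomorphically with the coefficients, and the critical values $R'(z_j)$ are nonconstant holomorphic functions of the coefficients; for instance, rescaling $R\mapsto\lambda R$ multiplies them by $\lambda$. So $\Im R'(z_j)=0$ defines a real hypersurface, and its complement is open and dense.
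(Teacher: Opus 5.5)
Your argument is correct and is essentially the paper's: the Cauchy--Riemann computation $|\nabla \Im R'|=|R''|$ gives the first claim, and genericity comes from the affine critical values of $R'$ avoiding $\RP^1$, which you justify in more detail than the paper's one-line codimension count. One point needs adding: rescaling $R\mapsto\lambda R$ cannot move a critical value equal to $0$ (a degenerate critical point of $R$, which does produce a singular point of $\I_R$), so you should also note that a generic $R$ of the given degrees has only nondegenerate affine critical points.
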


\begin{proof}
Away from the poles, $\I_R$ is the zero set of the harmonic function $h(z)=\Im R'(z)$.  A point of this zero set is singular only if $dh=0$.  Since $h$ is the imaginary part of the holomorphic function $R'$, the condition $dh=0$ is equivalent to $R''(z)=0$.  Thus the stated conditions are necessary.

The condition that a critical value of the rational map $R'$ lie on $\RP^1$ is real-codimension one in the parameter space.  Hence for generic $R$, all critical values of $R'$ avoid $\RP^1$ except those forced by poles, and the stated singularities do not occur.
\end{proof}

\section{Asymptotic directions and compact components}

The behavior of $\I_R$ at infinity is governed by the Laurent expansion of $R'$ at infinity.

\begin{theorem}\label{thm:asymptotic}
Assume that near infinity
\[
        R'(z)=c z^m+O(z^{m-1}),
        \qquad c\ne0,
\]
where $m\in\mathbb Z$.  If $m\ne0$, then outside a sufficiently large disk the curve $\I_R$ consists of $2|m|$ smooth unbounded arcs asymptotic to the rays
\begin{equation}\label{eq:asymptotic-rays}
        \arg z=\frac{j\pi-\arg c}{m},
        \qquad j\in\mathbb Z,
\end{equation}
where the right-hand side is taken modulo $2\pi$.  If $m=0$, then $R'(z)\to c$.  In this case $\I_R$ is bounded whenever $\Im c\ne0$.
\end{theorem}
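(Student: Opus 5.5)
The plan is to mirror the proof of Theorem~\ref{thm:local-pole}, with the coordinate $\zeta=1/z$ playing the role of $w$. First I rewrite the expansion at infinity as
\[
R'(z)=c\,z^m\bigl(1+g(1/z)\bigr),
\]
where $g$ is holomorphic near $0$ and $g(0)=0$. This form is legitimate because $R'$ is rational, so its Laurent expansion at infinity converges.

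The key step is to extract a single-valued holomorphic branch of the logarithm. For $|z|>\rho$ large, the quantity $1+g(1/z)$ stays in a small disk around $1$, so it has a holomorphic logarithm $\varepsilon(z)=\log(1+g(1/z))=O(1/z)$. Write $z=re^{i\theta}$. Then $R'(z)$ is real and nonzero exactly when
\[
\Phi(r,\theta):=\arg c+m\theta+\Im\varepsilon(re^{i\theta})\in\pi\mathbb Z .
\]
Here $R'(z)\neq0$ automatically, since $|R'(z)|\sim|c|r^m$.

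For the case $m\ne0$, I will compute
\[
\partial_\theta\Phi=m+\partial_\theta\Im\varepsilon .
\]
Since $\varepsilon$ is holomorphic in $1/z$ and vanishes at infinity, $\partial_\theta\varepsilon=iz\,\varepsilon'(z)=O(1/r)$. So for $r$ large, $\partial_\theta\Phi$ has the sign of $m$ and stays bounded away from zero. Hence, on each circle $|z|=r$, the map $\theta\mapsto\Phi(r,\theta)$ is strictly monotone. As $\theta$ runs over one full turn, $\Phi$ increases by exactly $2\pi m$, because the winding number of $1+g$ is zero.

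It follows that each level $\Phi=j\pi$ is met exactly $2|m|$ times per circle, modulo the periodicity. By the implicit function theorem, the solutions form $2|m|$ real-analytic arcs $\theta=\theta_j(r)$ for $r>\rho$. Since $\Im\varepsilon=O(1/r)$, these satisfy $\theta_j(r)=(j\pi-\arg c)/m+O(1/r)$, which gives the rays \eqref{eq:asymptotic-rays}. The arcs are smooth and unbounded, and they are the only points of $\I_R$ outside the disk. There are no poles out there after enlarging $\rho$, so $\I_R$ coincides with $\I_R^{\rm reg}$ plus possibly zeros of $R$, which still satisfy the same equation.

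I could add that $\theta_j(r)-\theta_j^\infty=O(1/r)$ yields true asymptotic lines, but only asymptotic direction rays are claimed, so I will not pursue this. The main obstacle is bookkeeping rather than analysis: I have to check that the $2|m|$ arcs are distinct and that none of them folds back. Both follow from the uniform monotonicity in $\theta$.

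For the case $m=0$, I have $R'(z)=c+O(1/z)$. If $\Im c\ne0$, then $|\Im R'(z)-\Im c|<|\Im c|/2$ for $|z|$ large, so $\Im R'\ne0$ there. Thus $\I_R$ is contained in a disk, and since $\I_R$ is closed in $\C$, it is bounded.
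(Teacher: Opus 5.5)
Your argument is correct, and it reaches the conclusion by a somewhat different mechanism than the paper.

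The paper works directly with $\Im R'$. After rescaling by $r^{-m}$ it has $\Im(ce^{im\theta})+O(1/r)$. It notes that the limiting trigonometric polynomial has $2|m|$ simple zeros and invokes the implicit function theorem near each of them. It leaves implicit that these branches are unique and exhaust $\I_R$ outside a large disk. That step needs a uniform lower bound for $|\Im(ce^{im\theta})|$ away from its zeros, and the implicit function theorem applied in the variable $1/r$ up to $1/r=0$.

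You instead pass to the argument $\Phi=\arg c+m\theta+\Im\varepsilon$ via a holomorphic logarithm of $1+g(1/z)$. Your key input is the uniform monotonicity $\partial_\theta\Phi=m+O(1/r)$ together with the total increment $2\pi m$ per turn. This costs the small step of constructing $\varepsilon$, but it yields three things in one stroke:
\begin{enumerate}[label=(\roman*)]
\item the exact count of $2|m|$ points on every large circle;
\item the fact that the arcs are global graphs $\theta=\theta_j(r)$ over $(\rho,\infty)$, with no folding or extra pieces;
\item the estimate $\theta_j(r)-\theta_j^\infty=O(1/r)$.
\end{enumerate}
So your version is the more complete of the two.

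Two small remarks.
\begin{enumerate}[label=(\alph*)]
\item It is the set of levels $\pi\mathbb Z$, not each single level $j\pi$, that is met $2|m|$ times per circle. Each $j\pi$ is attained once for $\theta\in\mathbb R$, and $j$ and $j+2m$ give the same point of the circle.
\item Your reading of ``asymptotic to the rays'' as a statement about directions is the right one. An $O(1/r)$ angular error only gives bounded distance from the ray. For example, for $R'(z)=z-a$ with $\Im a\ne0$ the curve is the line $\Im z=\Im a$, which is parallel to but not asymptotic to the real axis.
\end{enumerate}

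Your $m=0$ case is identical to the paper's.
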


\begin{proof}
For $z=re^{i\theta}$, the leading term gives
\[
        \Im R'(z)=r^m\Im(c e^{im\theta})+O(r^{m-1})
\]
when $m>0$.  If $m<0$, multiply the equation by $r^{-m}$; the same leading angular equation is obtained.  Thus the asymptotic directions are determined by
\[
        \Im(c e^{im\theta})=0,
\]
which gives \eqref{eq:asymptotic-rays}.  Since the zeros of this trigonometric polynomial are simple, the implicit-function theorem gives exactly one smooth branch approaching each asymptotic ray for sufficiently large $r$.

If $m=0$, then $R'(z)=c+O(1/z)$.  If $\Im c\ne0$, then $\Im R'(z)$ has the same nonzero sign as $\Im c$ for all sufficiently large $|z|$.  Therefore $\I_R$ has no points near infinity and is bounded.
\end{proof}

\begin{corollary}\label{cor:degree-cases}
Let $k=\deg Q$ and $\ell=\deg P$.  If $k\ne\ell$, then generically
\[
        R'(z)=c z^{k-\ell-1}+O(z^{k-\ell-2}),
        \qquad c\ne0.
\]
Thus $\I_R$ has $2|k-\ell-1|$ ends if $k-\ell-1\ne0$.  If $k=\ell+1$, then $R'(z)$ tends to a nonzero constant, and $\I_R$ is compact whenever the imaginary part of this constant is nonzero.
\end{corollary}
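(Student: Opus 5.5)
The plan is to read off the leading term of $R'$ at infinity from the leading coefficients of $Q$ and $P$, and then apply Theorem~\ref{thm:asymptotic} directly.

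\textbf{Leading term.} Write $Q(z)=q z^k+\dots$ and $P(z)=p z^\ell+\dots$ with $q,p\ne0$. Then $R(z)=\frac{q}{p}z^{k-\ell}\bigl(1+O(1/z)\bigr)$ near infinity. Since $R$ is a rational function, its Laurent expansion at infinity may be differentiated term by term. Assuming $k\ne\ell$, this gives
\[
R'(z)=(k-\ell)\frac{q}{p}\,z^{k-\ell-1}+O(z^{k-\ell-2}),
\qquad c=(k-\ell)\frac{q}{p}\ne0 .
\]
Equivalently, the leading coefficient of $W=Q'P-QP'$ is $(k-\ell)qp$, so $n=k+\ell-1$ as in Corollary~\ref{cor:degree-bound}, and $R'=W/P^2$ has order $k+\ell-1-2\ell=k-\ell-1$ at infinity. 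In fact this holds for every pair with $k\ne\ell$, not only generically; the word ``generically'' in the statement is harmless.

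\textbf{Case $m=k-\ell-1\ne0$.} Theorem~\ref{thm:asymptotic} applies with $m=k-\ell-1$. Outside a large disk, $\I_R$ consists of $2|m|$ smooth unbounded arcs, which are exactly the $2|k-\ell-1|$ ends.

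\textbf{Case $k=\ell+1$.} Here $m=0$ and $c=q/p\ne0$, so $R'(z)\to c$. The $m=0$ clause of Theorem~\ref{thm:asymptotic} shows that $\I_R$ is bounded whenever $\Im c\ne0$. Moreover, $\I_R$ is the zero set in $\C$ of the polynomial $F_R$ from Proposition~\ref{prop:equation}, so it is closed. Being closed and bounded, it is compact.

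\textbf{Main obstacle.} There is essentially none. The only point needing care is checking that the leading coefficient $(k-\ell)q/p$ does not vanish, which is exactly where the hypothesis $k\ne\ell$ is used.
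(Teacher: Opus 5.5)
Your proposal is correct and matches the paper's intended argument. The paper states this corollary without proof, as an immediate consequence of Theorem~\ref{thm:asymptotic}, once the leading coefficient $(k-\ell)q/p$ of $R'$ at infinity is read off. Your two refinements are also right: that coefficient is nonzero for every pair with $k\ne\ell$, so ``generically'' is superfluous, and closedness of $\I_R$ upgrades the theorem's ``bounded'' to the corollary's ``compact''.
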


\begin{remark}
When $k=\ell$, the leading term of $R$ at infinity is constant, and the leading nonconstant term must be inspected.  Generically $R(z)=a+b/z+O(z^{-2})$ with $b\ne0$, so $R'(z)=-bz^{-2}+O(z^{-3})$.  Hence $\I_R$ has four ends in the generic case $k=\ell$.
\end{remark}

\begin{proposition}\label{prop:no-compact-without-pole}
Every bounded connected component of the affine curve $\I_R$ contains at least one finite pole of $R$.
\end{proposition}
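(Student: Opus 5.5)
We argue by contradiction using the maximum principle for the harmonic function $h(z)=\Im R'(z)$, which is harmonic on $\C$ minus the finite poles of $R$. Suppose $K$ is a bounded connected component of the affine curve $\I_R$ (the zero set of $F_R$) that contains no pole of $R$. Then $K$ is compact, because connected components of the closed set $\I_R$ are closed, and $K$ lies in the open set $U=\C\setminus\{\text{poles of }R\}$. On $U$ the set $\I_R\cap U$ coincides with $h^{-1}(0)$, since $F_R=|P|^4 h$ there. The plan is to produce a bounded open region $D$ with $\overline D\subset U$, $h$ nonconstant on $D$, and $h=0$ on $\partial D$. The maximum and minimum principles then force $h\equiv0$ on $D$, which is the contradiction.

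\textbf{Step 1: find the region.} The complement $\C\setminus K$ has exactly one unbounded component. If $K$ separates the plane, take $D$ to be a bounded component of $\C\setminus K$ (a ``hole'' of $K$). Then $\partial D\subset K\subset h^{-1}(0)$. We also need $D$ to contain no pole; this is the delicate point. If a pole $p$ lies in $D$, then Theorem~\ref{thm:local-pole} gives $s+1\ge2$ analytic branches of $\I_R$ through $p$. These belong to a component $K'\ne K$ contained in $\overline D$, so in place of $K$ we use a hole of $K$ minus the closures of the components of $\I_R$ inside it. A cleaner way to organize this is to replace $D$ by a connected component $D_0$ of $D\setminus \I_R$ whose boundary lies in $K$. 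Such a $D_0$ exists: take the component of $D\setminus\I_R$ adjacent to $K$ from the inside. Since $\I_R$ has finitely many components by algebraicity, and every component through a pole carries branches leaving the pole, any component of $D\setminus\I_R$ adjacent to a pole has boundary meeting that pole's component. I expect this bookkeeping to be the main obstacle. A robust alternative is to choose $D_0$ to be a component of $\C\setminus\I_R$ having a nontrivial arc of $K$ on its boundary and lying inside the outer boundary of $K$. If $\partial D_0\subset K$, then $D_0$ is pole-free: a pole in $D_0$ would lie on $\I_R$, contradicting $D_0\cap\I_R=\emptyset$. So the point is to show $\partial D_0\subset K$, which follows because $\partial D_0$ is connected when $D_0$ is simply connected and contains an arc of $K$. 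I would handle it by taking $D_0$ adjacent to the outermost boundary curve of $K$ from inside.

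\textbf{Step 2: the case $K$ does not separate.} If $\C\setminus K$ is connected, then $K$ is a compact real-algebraic set without holes, hence a finite graph that is a tree, or a point. By Proposition~\ref{prop:singularities} and the local structure of zero sets of harmonic functions, at any point of $h^{-1}(0)$ the set looks locally like $2k$ arcs emanating from it ($k\ge1$), with $R'$ locally of the form $c+(z-z_0)^k$. So $K$ has no endpoints and no isolated points: every vertex has even valence $\ge2$. A finite graph with all degrees $\ge2$ contains a cycle, and a cycle separates the plane by the Jordan curve theorem. This contradicts the assumption that $K$ does not separate, so this case reduces to Step 1.

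\textbf{Step 3: conclude.} On $D_0$, $h$ is harmonic, continuous up to $\overline{D_0}$ (poles avoided), and vanishes on $\partial D_0$. Hence $h\equiv0$ on $D_0$, so $R'$ is a real constant on an open set and therefore constant, meaning $R$ is affine. A nonconstant affine $R$ has no poles and $\I_R=\C$ (or, after clearing denominators, the degenerate case excluded by the nonconstancy of $R'$). In the nondegenerate setting, where $R'$ is nonconstant, this is the desired contradiction. Polynomial vector fields then have no bounded components, as asserted in Theorem~\ref{thm:asymptotic}.
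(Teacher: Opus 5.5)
Steps~2 and~3 are sound: even valency forces a cycle, and the maximum principle applies on a pole-free region with $h=0$ on its boundary. They coincide with the paper's argument. There is, however, a genuine gap exactly where you flagged it: you never produce a pole-free region $D_0$ with $\partial D_0\subset K$.

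Your ``robust alternative'' assumes that the face adjacent to $K$ from inside is simply connected, and that is essentially the content of the proposition. Suppose $K$ were an oval enclosing a pole $p$. The branches through $p$ given by Theorem~\ref{thm:local-pole} would form a component $K'\neq K$ inside $K$. The face adjacent to $K$ from inside would then be annular, with inner boundary running through $p$, where $h$ is unbounded, so the maximum principle says nothing.

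No bookkeeping of faces can repair this using only the maximum principle and the local structure of the nodal set. Take $h=\log|z|+\varepsilon\operatorname{Re}(z^{-2})$, which is harmonic on $\C\setminus\{0\}$. Its nodal set near $0$ is a node, exactly as at a simple pole of $R$. Yet for small $\varepsilon>0$ it has a separate nodal oval near $|z|=1$ that encircles $0$ without passing through it. The paper's own proof passes over the same point: it asserts that a cycle of $K$ bounds a disk $D\subset U$ with $\overline U$ pole-free, but nothing prevents that disk from containing a pole.

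The missing ingredient is that $h$ is the imaginary part of a \emph{single-valued} meromorphic function $f=R'$. The cleanest way to use this bypasses faces altogether:
\begin{enumerate}[label=(\arabic*)]
\item Since $K$ contains no pole, $f$ is real-valued and continuous on the compact connected set $K$. Let $b=\max_K f$, attained at $z_0$.
\item As $f$ is nonconstant, near $z_0$ you can write $f(z)-b=\phi(z)^k$ with $\phi$ a local coordinate, $\phi(z_0)=0$ and $k\ge1$.
\item Locally $\I_R=f^{-1}(\R)=\phi^{-1}(\{w:w^k\in\R\})$ is a star of $2k$ arcs at $z_0$, so it lies in the component $K$.
\item On $k$ of these arcs $w^k>0$, i.e.\ $f>b$. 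This contradicts the choice of $b$, so $K$ must contain a pole.
\end{enumerate}
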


\begin{proof}
Let $K$ be a bounded connected component of $\I_R$, and suppose that $K$ contains no pole of $R$.  Choose a bounded open neighborhood $U$ of $K$ whose closure contains no pole and such that $\partial U\cap\I_R=\varnothing$.  In $U$ the function
\[
        h(z)=\Im R'(z)
\]
is harmonic and nonzero on $\partial U$.

The zero set of a nontrivial harmonic function is a locally finite real-analytic graph.  At a regular point of this graph the valency is two; at a critical zero the leading homogeneous harmonic polynomial has degree at least two, hence the valency is at least four and is even.  Since $K$ is compact, it is a finite graph and all its vertices have even valency.  Therefore, if $K$ is nonempty, it contains a cycle.  This cycle bounds a disk $D\subset U$ on whose boundary $h=0$.  By the maximum principle $h\equiv0$ on $D$, and hence $R'$ is constant real on $D$.  Analytic continuation then implies that $R'$ is constant on $\CP^1$, contrary to the assumption that $K$ is a genuine component of the inflection curve of a nonconstant derivative.  Thus $K$ must contain a pole.
\end{proof}

\begin{corollary}\label{cor:polynomial-no-oval}
If $R$ is a polynomial, then every connected component of the affine inflection curve $\I_R$ is unbounded.  More generally, if all finite poles of $R$ are simple and $\I_R$ is nonsingular away from them, then the number of bounded connected components of $\I_R$ is at most the number of poles of $R$, and hence at most $\deg P$.
\end{corollary}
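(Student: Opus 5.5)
The first assertion follows directly from Proposition~\ref{prop:no-compact-without-pole}. If $R$ is a polynomial, then $P$ is constant and $R$ has no finite poles. A bounded connected component of $\I_R$ would have to contain a finite pole, so no such component exists and every connected component is unbounded. One point needs care: the proposition's proof uses harmonicity of $h=\Im R'$ on a neighbourhood of the component. For a polynomial, $h$ is harmonic on all of $\C$, so this is automatic. The degenerate case in which $R'$ is a real constant is excluded, because then $\I_R$ would be all of $\C$, which is unbounded anyway.

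For the second assertion, assume all finite poles are simple and $\I_R$ is nonsingular away from them. I plan to assign to each bounded component $K$ a pole $p(K)\in K$, which exists by Proposition~\ref{prop:no-compact-without-pole}. Distinct connected components are disjoint, so no pole can lie in two of them, and $K\mapsto p(K)$ is injective. Hence the number of bounded components is at most the number of distinct finite poles. The number of distinct finite poles is at most $\deg P$; since the poles are simple it equals $\deg P$. On this reading the smoothness hypothesis is not even needed for the bound, and I would state that explicitly rather than invent a use for it.

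The alternative reading is that the bound should count ovals. Under the hypotheses, each simple pole is an ordinary node of $\I_R$ by Theorem~\ref{thm:local-pole}, and all other points are smooth. A bounded component is then a finite graph whose vertices are nodes of valency $4$. The cycle argument in the proof of Proposition~\ref{prop:no-compact-without-pole} shows that every cycle must pass through a pole: a cycle avoiding poles would bound a disk on which $h\equiv 0$ by the maximum principle, unless the disk contains a pole, where $h$ is not harmonic. I do not expect to need this refinement.

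The main obstacle, mild as it is, is making sure the proposition really applies to components that contain zeros of $R$ or lie in degenerate configurations. Here the curve is by definition the closure of the regular locus, and zeros of $R$ play no role in the harmonic argument, since $h$ is harmonic wherever $R$ is holomorphic. So I expect the proof to reduce to the injectivity argument above together with the trivial count of poles.
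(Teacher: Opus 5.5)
Your proposal is correct and matches the paper's proof: the polynomial case is the pole-free instance of Proposition~\ref{prop:no-compact-without-pole}, and the bound comes from assigning to each bounded component a pole it contains, which is an injection into the set of finite poles because distinct components are disjoint. You are also right that the nonsingularity hypothesis plays no role in this count; the paper's argument does not use it either.
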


\begin{proof}
The first claim is the special case of \cref{prop:no-compact-without-pole} in which $R$ has no finite poles.  The second follows because distinct connected components are disjoint and each bounded component contains at least one pole.
\end{proof}

\section{Relation with separatrices}

The local directions of $\I_R$ at poles are closely related to the separatrix directions of the rational vector field.  Separatrix graphs for rational vector fields on the sphere were studied in detail in \cite{Benzinger,NeedhamKing,BrannerDias,DiasGarijo,Klimes,Fiedler}.

If
\[
        R(z)=\frac{a}{(z-z_0)^s}+O((z-z_0)^{-s+1}),
        \qquad a\ne0,
\]
then the vector field $-R(z)\partial_z$ has $s+1$ incoming and $s+1$ outgoing separatrix directions at $z_0$, alternating around $z_0$.  These directions are obtained by solving the leading differential equation
\[
        \dot w=-a w^{-s},
        \qquad w=z-z_0.
\]

\begin{proposition}\label{prop:sep-inflection}
At a pole $z_0$ of order $s$, the tangent rays of $\I_R$ coincide with the union of the incoming and outgoing separatrix directions of the vector field $-R(z)\partial_z$.
\end{proposition}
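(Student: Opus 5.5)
The plan is to compute the separatrix directions of $-R(z)\partial_z$ at $z_0$ explicitly and compare them with the tangent rays \eqref{eq:pole-directions} obtained in \cref{thm:local-pole}. The key observation is that $s>0$ is real. Hence $\arg(-sa)=\arg(-a)$, and the inflection rays are exactly the solutions of
\[
        \arg(-a)-(s+1)\theta\in\pi\mathbb Z,\qquad \theta=\arg(z-z_0).
\]
It therefore suffices to show that the separatrix directions satisfy the same condition.

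\emph{Step 1: rectifying coordinate.} Along trajectories $dz/dt=-R(z)$, so the local time is
\[
        t=-\int\frac{dz}{R(z)}.
\]
Put $w=z-z_0$. Since $1/R(z)=a^{-1}w^s(1+O(w))$ is holomorphic, integrating gives
\[
        t=-\frac{w^{s+1}}{(s+1)a}\,(1+O(w)).
\]
We may choose $t(z_0)=0$. Write this as $t=\zeta^{s+1}$ with $\zeta=\lambda w\,(1+O(w))$, where $\lambda^{s+1}=-1/((s+1)a)$. Then $\zeta$ is a local biholomorphic coordinate. In this coordinate the trajectories of $-R\partial_z$ are the preimages of the horizontal lines $\Im t=\text{const}$ under $\zeta\mapsto\zeta^{s+1}$, traversed in the direction of increasing $\Re t$.

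\emph{Step 2: identify separatrices.} The trajectories reaching $z_0$ in finite time are the preimages of the real axis through $t=0$, i.e.\ the $2s+2$ rays $\zeta^{s+1}\in\R$. There are two kinds:
\begin{itemize}
\item those on which $t<0$ flow into $z_0$ (incoming);
\item those on which $t>0$ flow out of $z_0$ (outgoing).
\end{itemize}
These two kinds alternate in the cyclic order of the rays. Because $\zeta=\lambda w(1+O(w))$, these separatrices are smooth arcs tangent at $z_0$ to the rays on which $-w^{s+1}/a$ is real. This condition is equivalent to $(s+1)\theta-\arg(-a)\in\pi\mathbb Z$. As a cross-check, the same answer comes from the truncated equation $\dot w=-aw^{-s}$: radial motion requires $\Im(\dot w/w)=\Im(-a w^{-s-1})=0$.

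\emph{Step 3: compare.} By the observation above, the set of separatrix directions from Step 2 equals the set \eqref{eq:pole-directions}. This proves the proposition. One can also record the refinement that the index $m$ in \eqref{eq:pole-directions} is even for rays on one kind of separatrix and odd for the other. Consequently each smooth branch of $\I_R$ from \cref{thm:local-pole}, which joins the opposite rays $m$ and $m+s+1$, joins an incoming and an outgoing direction exactly when $s$ is even.

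The only step needing care is Step 2, namely justifying that the separatrices of the full field, and not merely of the truncated model, are tangent to the model rays. The rectifying coordinate $t=\zeta^{s+1}$ handles this cleanly: $\zeta$ is biholomorphic with derivative $\lambda\neq0$ at $z_0$, so it preserves tangent directions up to the fixed rotation by $\arg\lambda$, which is exactly the one built into the condition above.
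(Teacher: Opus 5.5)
Your proof is correct and takes essentially the paper's approach: you identify the separatrix rays as those on which $w^{s+1}/(-a)$ is real and match them with \eqref{eq:pole-directions} by discarding the positive factor $s$, which the paper does via the truncated model $\frac{d}{dt}w^{s+1}=-(s+1)a$. What you add is the time chart $t=-\int dz/R=\zeta^{s+1}$, which shows that the separatrices of the full field, and not only those of $\dot w=-aw^{-s}$, are tangent to these rays (a point the paper takes for granted), and your parity refinement also checks out: even $m$ gives outgoing rays, odd $m$ gives incoming rays, so a branch joins an incoming and an outgoing ray exactly when $s$ is even.
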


\begin{proof}
For the leading equation $\dot w=-a w^{-s}$, one has
\[
        \frac{d}{dt}w^{s+1}=-(s+1)a.
\]
Thus separatrix rays are characterized by the condition that $w^{s+1}/(-a)$ be real.  Equivalently,
\[
        \arg w=\frac{\arg(-a)+j\pi}{s+1}.
\]
On the other hand, by \cref{thm:local-pole}, the tangent rays of $\I_R$ are determined by
\[
        \Im((-sa)w^{-s-1})=0,
\]
or, equivalently, by the same condition up to the harmless positive factor $s$.
\end{proof}

\section{Exact real dessins and derivative-realizability}

The previous sections used only the fact that $f=R'$ is a rational map.  We now record the extra restriction imposed by being a rational derivative.

For a rational map $f$, let
\[
        \Gamma_f=f^{-1}(\RP^1)\subset \CP^1.
\]
This is a real dessin in the broad sense: it is the pullback of a circle under a branched cover of the sphere.  Inflection curves correspond to the special case $f=R'$.

\begin{proposition}\label{prop:exact}
A rational function $f$ can be represented in the form
\[
        f=R'
\]
for some rational function $R$ if and only if all residues of the meromorphic differential
\[
        f(z)\,dz
\]
at finite poles vanish.
\end{proposition}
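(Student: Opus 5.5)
The plan is to prove the two directions separately, using partial fractions for the converse.

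For necessity, suppose $f=R'$ with $R$ rational. At a finite pole $z_0$ of $f$, $R$ must also have a pole there, so $R$ has a Laurent expansion $R(z)=\sum_{j\ge -s} b_j (z-z_0)^j$ with $s\ge1$. Differentiating term by term gives $f(z)=\sum_{j\ge -s} j b_j (z-z_0)^{j-1}$. The coefficient of $(z-z_0)^{-1}$ would come from $j=0$, and that term carries the factor $0\cdot b_0$, so $\operatorname{Res}_{z_0} f\,dz=0$. Equivalently, $f\,dz=dR$ is exact near $z_0$, so its integral over a small loop around $z_0$ vanishes.

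For sufficiency, I would write the partial fraction decomposition
\[
        f(z)=p(z)+\sum_{z_0}\sum_{j=1}^{s_{z_0}}\frac{c_{z_0,j}}{(z-z_0)^j},
\]
where $p$ is a polynomial and the inner sum runs over the finite poles of $f$. By construction $c_{z_0,1}=\operatorname{Res}_{z_0}f\,dz$, which vanishes by hypothesis. Every remaining term has an explicit rational antiderivative:
\begin{itemize}
\item the polynomial $p$ has a polynomial antiderivative $\int p$;
\item for $j\ge2$, the term $c\,(z-z_0)^{-j}$ has antiderivative $-\frac{c}{j-1}(z-z_0)^{-j+1}$.
\end{itemize}
Summing these gives a rational $R$ with $R'=f$. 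If $f$ is nonconstant, we may check whether $R$ is nonconstant; for the statement as given this is not needed.

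There is essentially no obstacle here; the only point needing care is the role of infinity. The hypothesis deliberately mentions only finite poles. The residue at $\infty$ is then automatically zero by the residue theorem, since the finite residues sum to zero. In any case the polynomial part $p$ always integrates, so no condition at $\infty$ arises. I would also note that $R$ is unique up to an additive constant, and that $R$ is automatically meromorphic at $\infty$ because it is rational.
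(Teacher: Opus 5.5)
Your proposal is correct and follows essentially the same route as the paper. Necessity comes from exactness of $f\,dz=dR$ (your Laurent-coefficient computation just makes that explicit), and sufficiency comes from the partial fraction decomposition, where the vanishing of $c_{z_0,1}$ lets you write down the same explicit rational primitive.
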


\begin{proof}
If $f=R'$, then $f(z)\,dz=dR$, and exact meromorphic differentials have zero residues.

Conversely, write the partial fraction decomposition of $f$ as
\[
        f(z)=p(z)+\sum_{a}\sum_{j=1}^{m_a}\frac{c_{a,j}}{(z-a)^j}.
\]
The residue of $f(z)\,dz$ at $a$ is $c_{a,1}$.  If all residues vanish, then
\[
        R(z)=\int p(z)\,dz-
        \sum_a\sum_{j=2}^{m_a}\frac{c_{a,j}}{(j-1)(z-a)^{j-1}}
\]
is a rational primitive of $f$.  Hence $f=R'$.
\end{proof}

\begin{corollary}\label{cor:exact-dessins}
Inflection curves of rational vector fields are precisely the real dessins associated with exact rational differentials.
\end{corollary}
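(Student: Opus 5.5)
The plan is to unwind both sides of the asserted equality and match them using \cref{prop:exact}. By \cref{def:inflection-curve}, together with the remark that $\I_R=(R')^{-1}(\RP^1)$, the inflection curve of $v_R$ is the closure of the regular locus $\{\Im R'=0\}$. Taken in $\CP^1$, this is $\Gamma_f$ for $f=R'$. A ``real dessin associated with an exact rational differential'' is $\Gamma_f$ for a rational $f$ such that $f\,dz$ has zero residues at all finite poles. The corollary therefore says that
\[
\{\Gamma_{R'} : R \text{ rational nonconstant}\}=\{\Gamma_f : f\,dz \text{ exact}\}.
\]

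For the inclusion $\subseteq$, take a nonconstant $R$ and set $f=R'$. Then $f\,dz=dR$ is exact, and \cref{prop:exact} gives the residue condition directly. The only remaining point is that $\I_R$ coincides with $\Gamma_{R'}$ and is not merely contained in it. Points where $R'\in\R$ and $R\ne0$, $P\ne0$ lie in the regular locus. Poles of $R$ are poles of $R'$, so they map to $\infty\in\RP^1$, and \cref{thm:local-pole} shows they lie in the closure. Zeros of $R$ with $R'$ real are isolated exceptional points. The level set $\{\Im R'=0\}$ of the nonconstant harmonic function $\Im R'$ has no isolated points, because near any zero it is a union of analytic arcs, as in the proof of \cref{prop:no-compact-without-pole}. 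Hence these zeros also lie in the closure.

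For the inclusion $\supseteq$, take $f$ with vanishing residues. \cref{prop:exact} produces a rational $R$ with $R'=f$. If $f$ is nonconstant, then $R$ is nonconstant, and $\Gamma_f=\I_R$ by the previous paragraph. One should also remark that $R$ is determined only up to an additive constant. This does not change $R'$ or the curve, but it can move the zeros of $R$, which is consistent with these zeros playing no role after taking the closure.

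I expect the main obstacle to be the degenerate cases in this matching rather than any real argument. The first is constant $f$: for $f\in\R$ the set $\Gamma_f$ is all of $\CP^1$, and for $f\notin\R$ it is empty. Such $f$ correspond to affine-linear $R$, whose trajectories are straight lines, so either one excludes constant $f$ or one treats these as trivial dessins on both sides. The second is the point at infinity. I would handle it by working throughout on $\CP^1$ and noting that no residue condition at $\infty$ is required, because the residue theorem makes it automatic once all finite residues vanish.
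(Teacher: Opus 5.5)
Your proposal is correct and follows the paper's route: the paper gives no separate proof, since the corollary is immediate from \cref{prop:exact} together with $\I_R=(R')^{-1}(\RP^1)$, and your checks on the closure (poles, zeros of $R$, no isolated points of the level set, the residue at $\infty$) only spell out what the paper leaves implicit. One small slip in your degenerate-case remark: for a non-real constant $f=a$, the trajectories of $-(az+b)\partial_z$ are logarithmic spirals or circles, not straight lines. This is exactly why $\Gamma_f=\varnothing$ there, and it does not affect your argument.
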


\begin{corollary}\label{cor:pole-restriction}
If $f=R'$, then every finite pole of $f$ has order at least two.  A pole of $f$ of order $m$ gives a vertex of $f^{-1}(\RP^1)$ with $m$ smooth analytic branches, equivalently $2m$ incident tangent rays.
\end{corollary}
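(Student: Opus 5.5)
The statement has two parts: a restriction on pole orders of $f=R'$, and a local description of $f^{-1}(\RP^1)$ at such a pole. I would prove them in that order, each using an earlier result.

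\emph{Pole orders.} Let $a$ be a finite pole of $f=R'$. By \cref{prop:exact}, the residue of $f(z)\,dz$ at $a$ vanishes. Equivalently, $c_{a,1}=0$ in the partial fraction decomposition. A pole of order one would have $c_{a,1}\ne0$, so every finite pole of $f$ has order $m\ge2$.

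A direct argument gives more. If $R$ has a pole of order $s$ at $a$, then $R'$ has a pole of order exactly $s+1\ge2$. Poles of $R'$ occur only at poles of $R$, since $R'$ is holomorphic wherever $R$ is. So $m=s+1$ with $s\ge1$.

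\emph{Local structure at a pole of order $m$.} I would reuse the proof of \cref{thm:local-pole} with $s=m-1$, but phrase it for $f$ alone. Write $f(z)=b\,w^{-m}+O(w^{-m+1})$ with $w=z-a$ and $b\ne0$. The condition $f(z)\in\RP^1$ near $a$ includes the point $a$ itself, since $f(a)=\infty\in\RP^1$. Away from $a$, it is the condition $\Im f=0$.

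Multiply by $|w|^{2m}$ to get a real-analytic equation whose leading homogeneous part is $\Im(b\,\overline w^{\,m})$. This form has $2m$ distinct real linear factors: the zeros satisfy $\theta\equiv(\arg b+j\pi)/m$ for $j=0,\dots,2m-1$. The tangent cone is therefore reduced, and the real-analytic factorization used in \cref{thm:local-pole} gives $m$ smooth branches, one through each pair of opposite rays. That makes $2m$ incident tangent rays.

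A cleaner, coordinate-free route avoids the factorization lemma altogether. Near $a$, choose a local coordinate $\zeta$ with $1/f=\zeta^m$, possible because $1/f$ has a zero of order $m$. Then $f^{-1}(\RP^1)$ near $a$ is $\{\zeta^m\in\RP^1\}$, which is exactly $m$ real lines through $0$ in the $\zeta$-chart. Their images are $m$ smooth analytic branches, and since $\zeta\mapsto w$ is conformal, they meet at equal angles $\pi/m$. I would present this version, as it makes smoothness immediate.

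\emph{Main obstacle.} The only delicate point is that the branches are smooth rather than merely that the tangent rays have the right count. The local normal form $1/f=\zeta^m$ settles this cleanly, so there is no substantial obstacle. It remains to note that $\infty\in\RP^1$, so the pole does lie on the dessin.
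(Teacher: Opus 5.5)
Your proof is correct, and for the pole-order claim it is the same as the paper's: the vanishing of $c_{a,1}$ from \cref{prop:exact}.

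For the local structure the routes differ. The paper applies \cref{thm:local-pole} with $s=m-1$. That means returning to the pole of $R$ of order $s$ and using the leading-term computation together with the real-analytic factorization of plane germs with reduced tangent cone.

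Your preferred argument works with $f$ directly. Choosing $\zeta$ with $1/f=\zeta^m$ turns $f^{-1}(\RP^1)$ into $\{\zeta^m\in\R\}$, which is $m$ lines through $0$. Smoothness of the branches and the equal angles $\pi/m$ are then immediate, with no factorization lemma needed. This route is also more general. It never uses exactness or the primitive $R$, so it holds for any rational $f$. It also covers a pole of $f$ at $\infty$, which the second sentence of the statement literally includes (there $m=1$ is possible, e.g. $f=2z$). \cref{thm:local-pole} does not directly apply at $\infty$, because differentiation lowers rather than raises the pole order there.

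What the paper's route gives in exchange is an explicit link between the tangent rays and the leading coefficient of $R$, which is what \cref{prop:sep-inflection} uses. You recover the same directions from $\zeta\sim b^{-1/m}w$, namely $\arg w=(\arg b+j\pi)/m$.

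There is one small slip in your first version. The form $\Im(b\,\overline w^{\,m})$ is homogeneous of degree $m$, so it has $m$ distinct real linear factors, one per line, corresponding to $2m$ rays. It cannot have $2m$ linear factors. The paper's proof of \cref{thm:local-pole} contains the same miscount, and your preferred argument avoids the issue entirely.
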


\begin{proof}
The first assertion is exactly the vanishing of the residue coefficient.  The second is \cref{thm:local-pole} applied with $m=s+1$.
\end{proof}

This gives a first answer to the derivative-realizability problem: not every real dessin can arise from a rational vector field, because finite vertices lying over $\infty\in\RP^1$ must have even valency at least four and must satisfy the zero-residue condition in local coordinates.

\section{Irreducibility of the complexification}

Let
\[
        f(z)=R'(z)=\frac{A(z)}{B(z)}
\]
be written in lowest terms.  Denote by
\[
        f^\sigma(w)=\overline{f(\bar w)}
        =\frac{A^\sigma(w)}{B^\sigma(w)}
\]
the coefficientwise conjugate rational function.  The complexification of the equation $\Im f(z)=0$ is the separated-variable curve
\begin{equation}\label{eq:complexification}
        C_f:\quad A(z)B^\sigma(w)-A^\sigma(w)B(z)=0
        \subset \CP^1_z\times\CP^1_w.
\end{equation}
The real curve $\overline{\I}_R$ is obtained from $C_f$ by imposing the antiholomorphic real structure $w=\bar z$.

\begin{proposition}\label{prop:irreducibility-criterion}
The complex curve $C_f$ is irreducible if and only if the two covers
\[
        f:\CP^1_z\to\CP^1,
        \qquad
        f^\sigma:\CP^1_w\to\CP^1
\]
have no nontrivial common subcover.  Equivalently, the function fields $\C(z)$ and $\C(w)$, considered as extensions of $\C(t)$ by $t=f(z)=f^\sigma(w)$, have no common intermediate field other than $\C(t)$.
\end{proposition}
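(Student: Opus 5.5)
The plan is to identify $C_f$ with the fiber product $\CP^1_z\times_{\CP^1}\CP^1_w$ of the two covers $f$ and $f^\sigma$ over the $t$-line. Then the statement becomes the classical fact that a fiber product of two covers $X\to Y$, $X'\to Y$ of smooth irreducible curves is irreducible exactly when the function fields are linearly disjoint over $\C(Y)$. That in turn reduces to the absence of a common intermediate field, which matches the stated criterion.

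First I check that $C_f$, as a divisor of bidegree $(d,d)$ with $d=\deg f=\deg f^\sigma$, really is the fiber product. The polynomial $A(z)B^\sigma(w)-A^\sigma(w)B(z)$ vanishes exactly on pairs with $f(z)=f^\sigma(w)$. After homogenizing, it has no multiple components coming from common factors, because $(A,B)=1$ and $(A^\sigma,B^\sigma)=1$, so no factor depending only on $z$ or only on $w$ divides it. Generically over the $t$-line its fibers have $d^2$ points, matching the fiber product. Hence irreducible components of $C_f$ correspond to the factors of $A(z)-tB(z)$ over the field $K'=\C(w)$, where $t=f^\sigma(w)$. Equivalently, they correspond to the factors of $\C(z)\otimes_{\C(t)}\C(w)$, which is a product of fields, one for each component. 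So $C_f$ is irreducible iff $[\C(z)\C(w):\C(w)]=d$ inside a common compositum, i.e. iff $\C(z)$ and $\C(w)$ are linearly disjoint over $\C(t)$.

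Next I pass from linear disjointness to common subfields. If there is a common intermediate field $\C(t)\subsetneq L\subseteq\C(z)$ and $L\hookrightarrow\C(w)$, i.e. $f=g\circ h$ and $f^\sigma=g\circ h'$ with $\deg g\ge2$, then $C_f$ contains the proper curve $\{h(z)=h'(w)\}$ and is reducible. For the converse, which I expect to be the main obstacle, I would use Galois closure. Let $M$ be the Galois closure of $\C(z)\C(w)$ over $\C(t)$ with group $G$, and let $H,H'$ be the stabilizers of $z$ and $w$. The components of $C_f$ correspond to the double cosets $H\backslash G/H'$, so irreducibility means $G=HH'$.

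Here a caution is needed. In general, $G\neq HH'$ does not force $\langle H,H'\rangle\ne G$; the absence of a common subfield is equivalent only to $\langle H,H'\rangle=G$. So the precise ``iff'' needs an extra input. The intended reading is probably ``no common subcover up to the identification of fields inside $M$'', interpreted as in the usual separated-variable theory (Fried's theorem). I would therefore prove the forward implication rigorously. For the converse I would invoke Fried's criterion for separated-variable curves $A(z)B^\sigma(w)-A^\sigma(w)B(z)$ with $f^\sigma$ a twist of $f$. If that criterion does not apply verbatim, I would weaken the statement to the sufficient direction plus the double-coset formulation $G=HH'$.
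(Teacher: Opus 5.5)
You prove everything that can be proved here. $C_f$ is the reduced fiber product, and its components correspond to the field factors of $\C(z)\otimes_{\C(t)}\C(w)$, i.e.\ to double cosets $H\backslash G/H'$. A common subcover $f=g\circ h$, $f^\sigma=g\circ h'$ with $\deg g\ge 2$ produces the proper component $\{h(z)=h'(w)\}$. Your doubt about the converse is justified, and in fact \emph{the converse is false}. So the step you could not supply cannot be supplied. Your fallback also fails: invoking Fried's theory for twists cannot work, because Fried's exceptional reducible pairs are precisely of the form $(f,f^\sigma)$. The paper's proof has exactly this gap. It cites Fried--MacRae to identify every component of the fiber product with a common subcover, and no theorem can give that, as the following example shows.

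Let $G=\mathrm{PSL}_3(\mathbb{F}_2)$ act on the seven points of the Fano plane, with $H$ a point stabilizer and $H'$ a line stabilizer. The class triple $(2A,4A,7A)$ is rigid: the number of product-one triples is $168=|G|$, and every such triple generates $G$. Riemann--Hurwitz ($2+4+6=12$) gives genus $0$ with a $7$-cycle over $\infty$. So, up to affine reparametrization, there is a unique degree-$7$ polynomial $f$ of this type (points action) branched over $\{0,1,\infty\}$. Complex conjugation inverts local monodromy. Since $2A,4A$ are real classes and $7A^{-1}=7B$, $f^\sigma$ is the unique cover of type $(2A,4A,7B)$. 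That cover is the quotient $M^{H'}$ of the same Galois closure $M$, because the inverse-transpose automorphism exchanges points with lines and $7A$ with $7B$. As $H,H'$ are maximal and non-conjugate, $f$ is indecomposable and $f^\sigma\not\cong f$. Moreover $\langle H,gH'g^{-1}\rangle=G$ for every $g$, so no reinterpretation of ``common subcover'' inside $M$ rescues the statement. Yet $G$ has two orbits on points$\times$lines (incident and non-incident pairs). Hence $C_f=\{f(z)=f^\sigma(w)\}$ splits into components of bidegrees $(3,3)$ and $(4,4)$, and $f=R'$ for the polynomial $R=\int f\,dz$. This is the classical degree-$7$ exceptional pair from Fried's work, living over $\mathbb{Q}(\sqrt{-7})$.

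The provable statement is therefore the one you retreat to: a common subcover implies reducibility, and $C_f$ is irreducible $\iff G=HH'$. The converse needs extra hypotheses. For instance, if $\mathrm{Mon}(f)=S_d$, let $M_1,M_2$ be the Galois closures of $f,f^\sigma$. A case check on the common quotient $\mathrm{Gal}(M_1\cap M_2/\C(t))$ shows $C_f$ is reducible only when $f^\sigma\cong f$. That is the situation relevant to \cref{cor:generic-irreducible}.
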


\begin{proof}
The curve $C_f$ is the fiber product of the two covers $f$ and $f^\sigma$.  Reducible components of this fiber product are exactly the decompositions of the tensor product
\[
        \C(z)\otimes_{\C(t)}\C(w).
\]
The standard separated-variable criterion of Fried and MacRae identifies such decompositions with common intermediate covers of the two maps; see \cite{FriedMacRae}.  This gives the stated equivalence.
\end{proof}

\begin{corollary}\label{cor:generic-irreducible}
Outside the proper algebraic locus where $f=R'$ and $f^\sigma$ have a common subcover, the complexification $C_f$ is irreducible. In particular, irreducibility holds for a Zariski open dense subset of each exact degree class for which a full symmetric monodromy member exists.
\end{corollary}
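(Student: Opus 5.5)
The first assertion follows directly from \cref{prop:irreducibility-criterion}. If $f=R'$ and $f^\sigma$ share no nontrivial common subcover, then $C_f$ is irreducible. So the real content is to show that the locus where a common subcover exists is a proper Zariski-closed subset of each exact degree class, and that its complement is dense.

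I would fix an exact degree class. By this I mean the set $\mathcal E$ of rational functions $f$ of fixed degree $d$, with prescribed pole orders at least two (\cref{cor:pole-restriction}), satisfying the linear zero-residue conditions of \cref{prop:exact}. After parametrizing by the coefficients of $R$, this set is an irreducible quasi-projective variety, which I regard as a real variety. The condition ``$f$ and $f^\sigma$ have a common subcover of degree $e$'' with $1<e<d$ means $f=g\circ h_1$ and $f^\sigma=g\circ h_2$ with $\deg g = d/e$. These conditions are expressed by polynomial equations in the coefficients of $f$, $\bar f$, $g$, $h_1$, $h_2$. Eliminating the auxiliary maps, which range over finite-dimensional spaces modulo Möbius reparametrization, gives a constructible set by Chevalley's theorem. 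A finite union over divisors $e\mid d$ is needed, and its closure $Z$ is real-algebraic. This makes ``proper algebraic locus'' precise.

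Next I need to show $Z\ne\mathcal E$. The hypothesis supplies a member $f_0\in\mathcal E$ whose monodromy group is the full symmetric group $S_d$. Such an $f_0$ is indecomposable, since $S_d$ acts primitively on the $d$ sheets, so $f_0$ has no nontrivial subcover at all. Hence $f_0$ and $f_0^\sigma$ have no common subcover, and $f_0\notin Z$. Primitivity, and indeed full symmetric monodromy, is an open condition: it holds generically in an irreducible family once it holds at one point, because the branch data and monodromy are locally constant on the open set where the branch-point configuration is generic. So $\mathcal E\setminus Z$ is a nonempty Zariski-open subset of an irreducible variety, hence dense.

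The main obstacle is the middle step. One must verify that the decomposability locus is really Zariski closed rather than merely constructible, and that the exact degree class is irreducible. I would handle closedness by working directly with the set of indecomposable maps with full monodromy. Its complement is cut out by the discriminant-type conditions: collisions of critical values and non-simple critical points. These conditions are algebraic. Any map with only simple branching has monodromy $S_d$, being generated by transpositions and transitive, so this gives an explicit Zariski-open set avoiding $Z$. Irreducibility of $\mathcal E$ is immediate because the zero-residue conditions are linear in the partial-fraction coefficients of $f$.
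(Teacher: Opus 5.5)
Your reduction to \cref{prop:irreducibility-criterion} and your plan of exhibiting one good member match the paper. The gap is the step ``$f_0$ has monodromy $S_d$, hence $f_0$ and $f_0^\sigma$ have no common subcover''. Primitivity excludes only \emph{intermediate} subcovers. It does not exclude the common subcover of full degree $d$, namely $g=f_0$ itself, which exists exactly when $f_0^\sigma=f_0\circ\mu$ for some M\"obius $\mu$.

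Your $Z$ is built only from factorizations with $1<\deg g<d$, so it omits this case, and this case genuinely produces reducibility. If $f$ has real coefficients then $f^\sigma=f$, the factor $z-w$ divides $A(z)B(w)-A(w)B(z)$, and $C_f$ contains the diagonal; equivalently, the real axis lies in $\I_R$. A real polynomial with simple finite critical points and distinct critical values has monodromy $S_d$ and no intermediate factorization. It therefore escapes your $Z$, yet its $C_f$ is reducible.

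Degree two makes the failure stark. Every degree-two $f$ is indecomposable with monodromy $S_2$ and has only simple branching, so your $Z$ is empty and your explicit open set is everything. Your argument would then prove $C_f$ always irreducible. This contradicts \cref{cor:degree-two-irreducible}, where $\Im\beta=0$ yields two lines precisely because $f^\sigma$ is then equivalent to $f$.

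The missing ingredient is the hypothesis the paper's proof states explicitly: the full-monodromy member must also be \emph{not equivalent to its conjugate cover}. With it the argument closes. Suppose $f_0=g\circ h_1$ and $f_0^\sigma=g\circ h_2$ with $\deg g>1$. Primitivity forces $\deg g=d$, so $h_1,h_2$ are M\"obius and $f_0^\sigma=f_0\circ h_1^{-1}\circ h_2$, a contradiction.

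Non-equivalence is easy to arrange. Let $\mathrm{CV}(f)$ be the set of finite critical values; $f^\sigma=f\circ\mu$ forces $\overline{\mathrm{CV}(f)}=\mathrm{CV}(f)$. Adding a suitable constant to $f_0$ (that is, adding a linear term to $R$) breaks this invariance while preserving exactness and the monodromy group. Correspondingly, $Z$ must contain the equivalence locus, and your explicit open set must impose non-invariance of $\mathrm{CV}(f)$.

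Two smaller repairs are also needed.
\begin{enumerate}[label=(\roman*)]
\item In an exact class the fiber over $\infty$ is forced to ramify: totally for polynomials, and poles of order $\ge3$ are non-simple critical points. So ``only simple branching'' describes an empty set in most classes. Require simple branching only over the finite critical values; this still yields $S_d$, because loops around the finite branch values generate the monodromy group.
\item Since $f\mapsto f^\sigma$ is antiholomorphic, Chevalley's theorem must be applied with the coefficients of $f$ and $f^\sigma$ treated as independent complex variables, before restricting to the real slice.
\end{enumerate}
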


\begin{proof}
Having a common subcover of degree $>1$ is an algebraic condition on the coefficients of the two rational maps; it is the usual exceptional locus in the separated-variable problem.  Therefore its complement is Zariski open. When one member of the exact degree class has full symmetric monodromy and is not equivalent to its conjugate cover, the exceptional locus is proper.  Generic polynomial derivatives already provide such members, by the classical full-monodromy property of a general polynomial of given degree.  The result follows from Proposition~\ref{prop:irreducibility-criterion}.
\end{proof}

\begin{remark}
The statement is deliberately formulated for the complexification.  The real affine set $\I_R\subset\C$ may have several connected components even when $C_f$ is irreducible.  For instance, the equation $2xy+1=0$ is irreducible but has two affine branches.
\end{remark}

\section{Low-degree exact dessins}\label{sec:LowDegree}

We now solve the first cases of the low-degree classification problem.  Here $\deg f$ means the degree of the rational map $f=R'$.

\begin{proposition}\label{prop:degree-one-two}
Let $f=R'$ be an exact rational function of degree at most two.
\begin{enumerate}[label=(\alph*)]
\item If $\deg f=1$, then $f$ is a nonconstant polynomial of degree one.  Hence $\I_R$ is an affine real line.
\item If $\deg f=2$, then exactly one of the following occurs.
\begin{enumerate}[label=(\roman*)]
\item $f$ is a quadratic polynomial.  After translation and complex scaling of the source coordinate it has the form
\[
        f(z)=\alpha z^2+\beta,
        \qquad \alpha\ne0.
\]
The inflection curve is affinely equivalent to
\[
        2xy+\Im\beta=0.
\]
Thus it is a smooth hyperbola if $\Im\beta\ne0$, and the union of two transverse lines if $\Im\beta=0$.
\item $f$ has one finite pole, necessarily of order two.  Then
\[
        f(z)=\beta+\frac{\alpha}{(z-p)^2},
        \qquad \alpha\ne0.
\]
In the coordinate $u=1/(z-p)$ the inflection curve is again affinely equivalent to
\[
        2xy+\Im\beta=0.
\]
In the $z$-sphere, the point $p$ is an ordinary real node of the projective dessin.
\end{enumerate}
\end{enumerate}
\end{proposition}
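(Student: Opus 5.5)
The plan is to combine the pole-order restriction of \cref{cor:pole-restriction} with a direct count of the poles of $f$ on $\CP^1$, split according to whether $\infty$ is a pole. A rational map of degree $d$ has exactly $d$ poles on $\CP^1$ counted with multiplicity. By \cref{prop:exact} and \cref{cor:pole-restriction}, every finite pole of the exact function $f$ has order at least two. The constraint at $\infty$ is different: $f=R'$ always has a zero of order at least two at $\infty$ when $R$ is finite there, so the residue condition is only imposed at finite poles. I will not need any condition at $\infty$ beyond the pole count.

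For $\deg f=1$, the single pole cannot be finite, since a finite pole would have order one. So the pole is at $\infty$ and $f=\alpha z+\beta$ with $\alpha\ne0$. Then $\Im f=0$ is the real line $\Im(\alpha z)=-\Im\beta$.

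For $\deg f=2$ there are two possible distributions of poles.
\begin{enumerate}[label=(\roman*)]
\item If $f$ has no finite pole, then $f$ is a quadratic polynomial. Completing the square by a translation puts it in the form $\alpha z^2+\beta$. Scaling $z\mapsto \lambda z$ with $\lambda^2=1/\alpha$ gives $z^2+\beta$; I will keep $\alpha$ in the statement, absorbing it by that scaling. Writing $z=x+iy$, the equation becomes $\Im(z^2)+\Im\beta=2xy+\Im\beta=0$. This is a hyperbola if $\Im\beta\ne0$ and the two coordinate axes otherwise.
\item If $f$ has a finite pole, it has order $\ge2$ and so uses up the whole degree. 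This rules out a second finite pole and a pole at $\infty$, so $f$ is bounded at $\infty$. Hence $f=\beta+\alpha/(z-p)^2$; the $1/(z-p)$ term is absent by the zero-residue condition. Substituting $u=1/(z-p)$ gives $\beta+\alpha u^2$, which reduces to case (i) after scaling $u$.
\end{enumerate}

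The node assertion at $p$ follows from \cref{thm:local-pole}. Here $R=-\alpha/(z-p)+\beta z$ has a simple pole at $p$, so $p$ is an ordinary real node. Equivalently, the two asymptotic lines at $u=\infty$ of the $u$-picture are carried by the inversion to two transverse branches through $p$.

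The only delicate point is the word ``affinely equivalent'' in case (ii). The change $u=1/(z-p)$ is a Möbius map, not an affine map of the $z$-plane. So the claim is that the curve, written in the $u$-coordinate and then scaled, is affinely equivalent to $2xy+\Im\beta=0$. I will state it that way explicitly and note that the equivalence holds on the sphere. Everything else is a short computation, and exhaustiveness (``exactly one'') follows from the pole-count dichotomy above.
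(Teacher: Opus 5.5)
Your proof is correct and follows essentially the same route as the paper: the pole count combined with the exclusion of simple finite poles, completing the square and rescaling, the inversion $u=1/(z-p)$, and \cref{thm:local-pole} with $s=1$ for the node. Two of your points make explicit what the paper leaves implicit. You name the primitive $R=\beta z-\alpha/(z-p)$, so that $s=1$ is seen to refer to a pole of $R$ rather than of $f$. You also note that ``affinely equivalent'' in case (ii) holds only after the M\"obius change of coordinate.
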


\begin{proof}
If $\deg f=1$ and $f$ had a finite pole, that pole would be simple, contradicting \cref{cor:pole-restriction}.  Therefore $f$ is linear, and $\Im f(z)=0$ is a real affine line.

Let $\deg f=2$.  The pole divisor of a degree-two rational function has total degree two.  Since finite poles of an exact rational function cannot be simple, either all pole order is at infinity, in which case $f$ is a quadratic polynomial, or there is one finite pole of order two and no pole at infinity, in which case $f(z)=\beta+\alpha/(z-p)^2$.  These are the two alternatives.

In the polynomial case, completing the square gives $f(z)=\alpha(z-p)^2+\beta$.  Replacing $z-p$ by a square root of $\alpha$ reduces the equation to $\Im(u^2+\beta)=0$, i.e.
\[
        2xy+\Im\beta=0,
        \qquad u=x+iy.
\]
The finite-pole case is identical after the inversion $u=1/(z-p)$.  The final assertion about the node at $p$ is the case $s=1$ of \cref{thm:local-pole}.
\end{proof}

\begin{corollary}\label{cor:degree-two-irreducible}
For $\deg R'\le2$, the projective complexification of $\I_R$ is irreducible except in the evident case where the unique finite critical value of $R'$ lies on $\RP^1$; in that exceptional case the curve is the union of two real lines after a suitable affine or inverted affine coordinate.
\end{corollary}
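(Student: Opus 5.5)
The plan is to work case by case from the normal forms of \cref{prop:degree-one-two} and compute the complexification $C_f$ of \eqref{eq:complexification} directly. In each normal form $C_f$ becomes a quadric or a line, and a quadric's irreducibility can be read off from a single determinant. Pre- and post-compositions with affine maps of $z$ are harmless. A real affine change $z\mapsto az+b$ induces $w\mapsto \bar a w+\bar b$ on the conjugate side. So it acts on $\CP^1_z\times\CP^1_w$ by an automorphism preserving the real structure $w=\bar z$, and it maps components to components. The same holds for the inversion $u=1/(z-p)$ paired with $\bar u=1/(w-\bar p)$.

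For $\deg f=1$, $f(z)=\alpha z+\beta$ and $C_f$ is $\alpha z+\beta-\bar\alpha w-\bar\beta=0$. This is a line, so it is irreducible; there is no finite critical value, so this case fits the statement vacuously. For $\deg f=2$ polynomial, after the normalization of \cref{prop:degree-one-two} we have $f=u^2+\beta$ with $u$ a rescaled coordinate. Then $C_f$ is
\[
u^2-v^2+\beta-\bar\beta=0,\qquad v=\bar u \text{ on the real locus}.
\]
This conic is reducible exactly when $\beta=\bar\beta$, i.e. $\Im\beta=0$. In that case it splits as $(u-v)(u+v)=0$, which on the real locus gives the two lines $\Im u=0$ and $\Re u=0$. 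The unique finite critical value of $f$ is $f(p)=\beta$ at the critical point $p$. The other critical point is at infinity with critical value $\infty$, which lies on $\RP^1$ for every $f$, so it is not the relevant one. Hence reducibility is equivalent to the finite critical value lying on $\RP^1$. I will verify irreducibility when $\Im\beta\ne0$ by the projective determinant: the form $u^2-v^2+c\,t^2$ with $c\ne0$ is a nondegenerate conic.

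In the pole case, $f=\beta+\alpha/(z-p)^2$. I rescale $u=\lambda/(z-p)$ with $\lambda^2=\alpha$, which gives $f=\beta+u^2$. The analysis then repeats verbatim in the inverted affine coordinate. The critical points are $z=\infty$, with value $\beta$, and $z=p$, with value $\infty$. So again the unique finite critical value is $\beta$, and reducibility occurs exactly when $\beta\in\R$. In that case the components are two real lines in the $u$-coordinate, corresponding to two circles/lines through $p$ in $z$, which matches the node of \cref{thm:local-pole}.

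The main obstacle is bookkeeping rather than mathematics. First, I must check that the coordinate changes respect the bidegree structure on $\CP^1\times\CP^1$, so that irreducibility of the bidegree-$(2,2)$ curve $C_f$ is equivalent to irreducibility of the affine conic after clearing denominators. Clearing $(z-p)^2(w-\bar p)^2$ must not introduce spurious components, and I would check this against the lowest-terms form $A B^\sigma-A^\sigma B$. Second, I must interpret "critical value" consistently, excluding the forced value $\infty$. As a cross-check, I would also cite \cref{prop:irreducibility-criterion}. Since $\deg f=2$ is prime, a common subcover must be of degree $2$, meaning $f^\sigma=\mu\circ f$ up to source isomorphism. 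Whether $f^\sigma$ and $f$ are the same cover then depends only on whether the branch values $\{\beta,\infty\}$ and $\{\bar\beta,\infty\}$ coincide, i.e. whether $\beta\in\R$.
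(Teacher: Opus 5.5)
Your proposal is correct and follows essentially the paper's route: the normal forms of \cref{prop:degree-one-two} reduce everything to the conic $u^2-v^2+2i\Im\beta=0$ (the complex form of $2xy+\Im\beta=0$), which degenerates into $(u-v)(u+v)=0$ exactly when the unique finite critical value $\beta$ is real. Your extra bookkeeping (no spurious components at infinity or from clearing $(z-p)^2(w-\bar p)^2$, and the explicit degree-one case) only fills in details the paper leaves implicit; one small slip in the optional cross-check is that a common degree-two subcover means $f^\sigma=f\circ\mu$ for a M\"obius $\mu$ of the source, not $\mu\circ f$, though your branch-value conclusion is right.
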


\begin{proof}
The normal forms in \cref{prop:degree-one-two} reduce the claim to the equation $2xy+c=0$.  If $c\ne0$, it is irreducible.  If $c=0$, it factors as the union of the two coordinate axes.
\end{proof}

The next degree already displays the first genuinely nontrivial dessin combinatorics, but exactness still gives a short list of normal forms.

\begin{proposition}\label{prop:degree-three-normal-forms}
Let $f=R'$ be an exact rational function of degree three.  Up to translation of the source coordinate, $f$ belongs to one of the following three classes:
\begin{enumerate}[label=(\roman*)]
\item a cubic polynomial;
\item one finite triple pole and no pole at infinity,
\[
        f(z)=c+\frac{a}{z^3}+\frac{b}{z^2},
        \qquad a\ne0;
\]
\item one finite double pole and a simple pole at infinity,
\[
        f(z)=az+b+\frac{c}{z^2},
        \qquad ac\ne0.
\]
\end{enumerate}
For a generic member of each class, all singularities of $f^{-1}(\RP^1)$ are the pole-vertices forced by \cref{cor:pole-restriction}; additional finite singularities occur exactly when a remaining critical value of $f$ lies on $\RP^1$.
\end{proposition}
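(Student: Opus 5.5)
The plan mirrors the proof of \cref{prop:degree-one-two}: enumerate the possible pole divisors of a degree-three exact $f$, write down the partial fraction form in each case, and then invoke \cref{prop:singularities} for the genericity statement.

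\textbf{Pole divisors.} The pole divisor of $f$ on $\CP^1$ has total degree three. By \cref{cor:pole-restriction}, every finite pole has order at least two, and the residue coefficient $c_{a,1}$ must vanish. Since $2+2>3$, there is at most one finite pole. The admissible distributions are therefore:
\begin{itemize}[label={}]
\item order three at infinity (no finite pole);
\item a single finite pole of order three and none at infinity;
\item a finite pole of order two together with a simple pole at infinity.
\end{itemize}
A finite pole of order one is excluded by exactness.

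\textbf{Normal forms.} After translating the finite pole to $0$, I would write the partial fraction decomposition from the proof of \cref{prop:exact} and delete the $1/z$ term, since its residue must vanish. This gives the following.
\begin{enumerate}[label=(\roman*)]
\item With no finite pole, $f$ is a cubic polynomial.
\item With a triple pole at $0$ and none at infinity, the polynomial part is a constant $c$. Hence $f=c+a/z^3+b/z^2$, and $a\ne0$ is the condition that the pole really has order three.
\item With a double pole at $0$ and a simple pole at infinity, the polynomial part is $az+b$ with $a\ne0$, and the double pole contributes $c/z^2$ with $c\ne0$.
\end{enumerate}
Conversely, each of these forms has degree exactly three and zero residues, so it is exact by \cref{prop:exact}. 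This establishes the trichotomy.

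\textbf{Genericity.} Away from finite poles, \cref{prop:singularities} shows that a singular point of $f^{-1}(\RP^1)$ is a point $z$ with $f'(z)=0$ and $f(z)\in\R$. Thus extra singularities occur exactly when a critical value of $f$ over a finite critical point lies on $\RP^1$; conversely, at such a critical point $\Im f$ has a degenerate zero, so the point is genuinely singular.

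At the point at infinity, and at finite poles, the local structure is governed by the order of the pole as in \cref{thm:local-pole}. For the point at infinity this must be done in the coordinate $1/z$: if $f$ has a pole of order $m$ at $\infty$, the curve has $m$ smooth branches there. These are the forced pole-vertices. In case (iii) the simple pole at infinity gives a smooth point of the projective curve.

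It remains to show that in each class the condition ``some remaining critical value lies on $\RP^1$'' is a proper real-codimension-one condition. I expect this to be the main obstacle: it needs a member of the family, not just an abstract argument, to show the exceptional locus is proper. For this I would exhibit an explicit member of each class and locate its critical points directly:
\begin{enumerate}[label=(\roman*)]
\item $f=z^3-3z+i$ has critical values $\pm2+i$, which are not real.
\item For $f=i+1/z^3+b/z^2$, the critical points are $0$ (a pole) and $z=-3/(2b)$. Choosing $b$ real gives a non-real critical value.
\item $f=z+i+1/z^2$ has critical points at the cube roots of $2$, with critical values $\tfrac32\,2^{1/3}\omega+i$ for cube roots of unity $\omega$. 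Here one must check that adding $i$ keeps all three values off the real line.
\end{enumerate}
Once nonreal critical values are established for one member of each class, the complement of the exceptional locus is a nonempty open set, and the genericity statement follows.
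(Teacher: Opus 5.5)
Your derivation of the three normal forms is the paper's argument: count the pole divisor, use that finite poles of an exact $f$ have order at least two, then translate the pole to $0$ and delete the $1/z$ term. Your converse check is a harmless addition.

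For the genericity clause the paper only cites \cref{prop:singularities}. Its codimension-one claim concerns generic $R$ among all rational functions, and does not show that the exceptional set is proper inside each restricted class. Your explicit witnesses address exactly this, which is an improvement on the paper. The check you left open in (iii) does go through: the values $\tfrac32\,2^{1/3}\omega+i$ have imaginary parts $1$, $1+\tfrac{3\sqrt3}{4}2^{1/3}$ and $1-\tfrac{3\sqrt3}{4}2^{1/3}\approx-0.64$.

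There is one concrete slip, in class (ii). Since the $1/z$ term is absent, in the coordinate $u=1/z$ you have $f=c+bu^2+au^3$. So $z=\infty$ is \emph{always} a critical point of $f$, of multiplicity one if $b\ne0$ and two if $b=0$, with critical value $c$. By Riemann--Hurwitz the critical points are the triple pole (counted twice), $-3a/(2b)$, and $\infty$.

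\begin{itemize}
\item Your list ``$0$ and $-3/(2b)$'' misses $\infty$.
\item Your remark that the local structure at $\infty$ is governed by the pole order does not apply, because $\infty$ is not a pole in this class.
\item When $c\in\R$, the curve $f^{-1}(\RP^1)$ has an unforced singular point at $\infty$. It is a node if $b\ne0$, since $\Im(bu^2+au^3)=0$ has four tangent rays.
\item Hence the first genericity clause also needs $c\notin\R$.
\item The ``exactly'' clause is true only in your reading, via critical values over \emph{finite} critical points.
\end{itemize}

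Your witness survives because $c=i$, but this has to be stated.

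Finally, a nonempty open complement is not yet genericity. Here is a short repair that also makes the witnesses unnecessary. The map $f\mapsto f+it$, $t\in\R$, stays in the same class and fixes all critical points. It translates every remaining critical value, including $c$, by $it$. So along each such line only finitely many $t$ are exceptional, and the closed exceptional set has empty interior.
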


\begin{proof}
The pole divisor of a degree-three rational map has total degree three.  Finite poles of an exact rational function have order at least two.  Therefore the only possible pole partitions are: all order at infinity, giving a cubic polynomial; one finite pole of order three; or one finite pole of order two together with a simple pole at infinity.  These alternatives give the displayed forms after translating the finite pole to the origin and using the zero-residue condition to remove the $1/z$ term.

The assertion about singularities follows from \cref{prop:singularities}: away from poles, singularities of the real dessin occur only at critical points whose critical values are real.
\end{proof}

\begin{remark}
Thus the remaining degree-three classification is finite-dimensional and explicit: one has only to follow the four critical values of a degree-three rational map relative to the circle $\RP^1$, with the pole critical values forced to lie over $\infty$.  This is a tractable continuation problem rather than an unrestricted Hilbert-sixteenth-type problem.
\end{remark}

\section{Illustrations}

The following two figures are not meant to be special normal forms.  They are included to make visible the geometry behind the preceding local statements.  In both figures the thin gray curves are real trajectories of the displayed vector field, the red curve is the algebraic inflection curve $\I_R=\{\Im R'(z)=0\}$, black dots denote poles of $R$, and open circles denote zeros of $R$.

\begin{figure}[t]
\centering
\includegraphics[width=.92\textwidth]{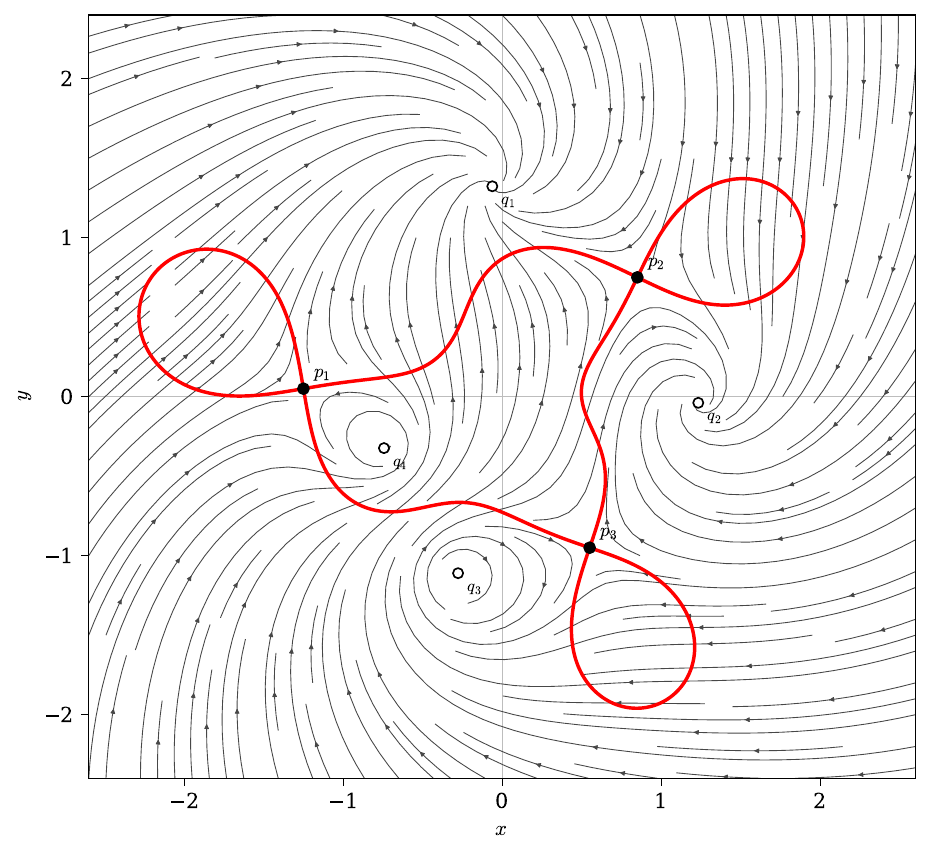}
\caption[Compact three-pole example]{A compact three-pole example for the rational vector field $v_{R_1}=-R_1(z)\partial_z$, where
$R_1(z)=(0.75+0.90i)z+\frac{1+0.35i}{z-p_1}+\frac{-0.75+0.95i}{z-p_2}+\frac{0.85-0.65i}{z-p_3}$, with
$p_1=-1.25+0.05i$, $p_2=0.85+0.75i$, and $p_3=0.55-0.95i$.  Since $R_1'(z)\to0.75+0.90i$ at infinity, the inflection curve is bounded, in agreement with \Cref{thm:asymptotic}.}
\label{fig:compact-three-poles}
\end{figure}

\begin{figure}[t]
\centering
\includegraphics[width=.92\textwidth]{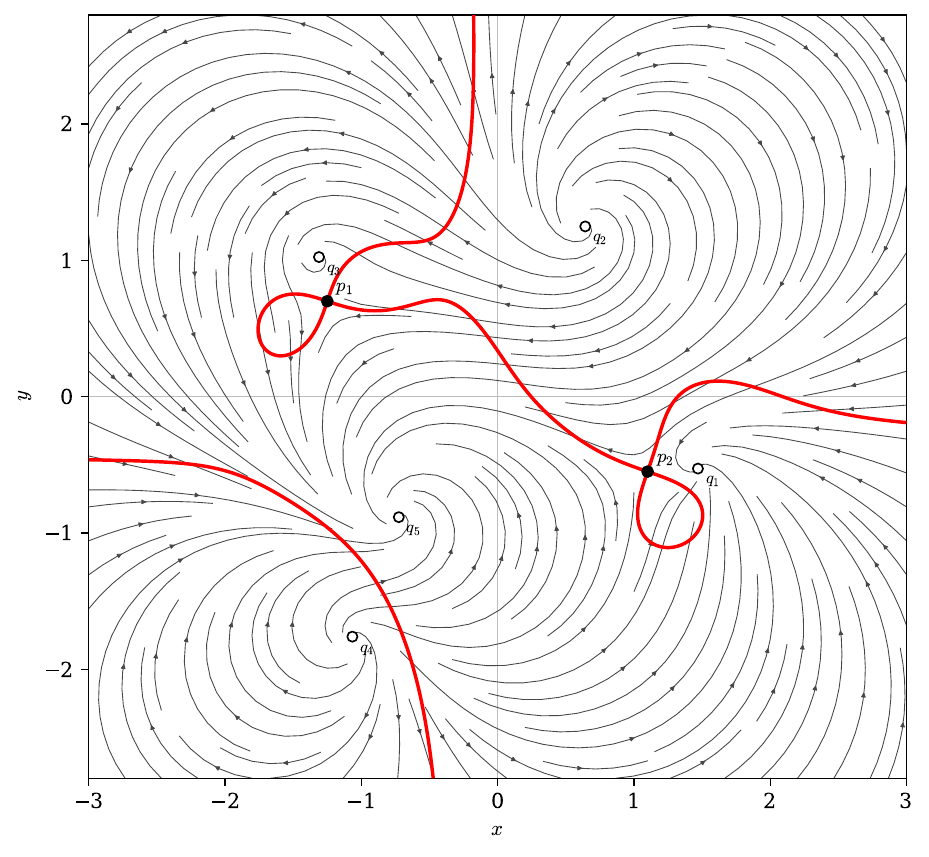}
\caption[Noncompact example with polynomial drift]{A noncompact example for the rational vector field $v_{R_2}=-R_2(z)\partial_z$, where
$R_2(z)=\frac13z^3+(0.28+0.35i)z^2+(0.40-0.60i)z+\frac{0.80-0.60i}{z-p_1}+\frac{-0.90+0.70i}{z-p_2}$, with
$p_1=-1.25+0.70i$ and $p_2=1.10-0.55i$.  The quadratic leading term of $R_2'$ produces unbounded ends of $\I_{R_2}$, while the two simple poles appear as ordinary nodes on the red curve.}
\label{fig:noncompact-two-poles}
\end{figure}

\FloatBarrier

\section{Examples}

\begin{example}[Polynomial vector fields]
Let $R(z)=z^d$, $d\ge2$.  Then
\[
        R'(z)=d z^{d-1},
        \qquad
        \I_R=\{z:\Im z^{d-1}=0\}.
\]
This is the union of $d-1$ real lines through the origin.  This is nongeneric: the critical value $0$ of $R'$ lies on $\RP^1$.
\end{example}

\begin{example}[A simple pole]
Let $R(z)=1/z$.  Then $R'(z)=-z^{-2}$, so
\[
        \I_R=\{z:\Im(-z^{-2})=0\}.
\]
After clearing denominators this is equivalent to $\Im z^2=0$.  Thus the pole is an ordinary node, in agreement with \cref{thm:local-pole}.
\end{example}

\begin{example}[Compact case]
Let
\[
        R(z)=(1+i)z+\frac{a_1}{z-p_1}+\cdots+\frac{a_\ell}{z-p_\ell},
\]
where the $p_j$'s are distinct.  Then
\[
        R'(z)=1+i-\sum_{j=1}^{\ell}\frac{a_j}{(z-p_j)^2}.
\]
Since $R'(z)\to1+i$, \cref{thm:asymptotic} implies that $\I_R$ is compact.  Each simple pole $p_j$ is an ordinary node, provided no additional singularities occur.
\end{example}

\begin{example}[Noncompact case]
If
\[
        R(z)=z+\frac{1}{z},
\]
then
\[
        R'(z)=1-z^{-2}.
\]
The limiting constant at infinity is real, so the compactness criterion does not apply.  Indeed, the equation $\Im(1-z^{-2})=0$ is equivalent after clearing denominators to $\Im z^2=0$, and the inflection curve is unbounded.
\end{example}

\section{Open problems}

The results above settle the basic local theory, the exactness obstruction, generic irreducibility in the separated-variable sense, and the first low-degree cases.  Several natural questions remain.

\begin{problem}\label{prob:hilbert}
Fix $(k,\ell)$.  Classify the topological types of real algebraic curves which occur as $\I_R$ for rational functions
\[
        R=Q/P,
        \qquad \deg Q=k,
        \qquad \deg P=\ell,
\]
with $P,Q$ coprime and generic.
\end{problem}

\begin{problem}\label{prob:dessin}
Classify real dessins associated with exact rational differentials, i.e. embedded graphs of the form
\[
        f^{-1}(\RP^1),
        \qquad f(z)\,dz=dR.
\]
Equivalently, classify real dessins satisfying the zero-residue condition at all finite poles lying over $\infty\in\RP^1$.
\end{problem}

\begin{problem}\label{prob:degree-three}
Complete the degree-three classification using the three normal forms in \cref{prop:degree-three-normal-forms}.  In particular, determine which relative positions of the non-forced critical values with respect to $\RP^1$ give distinct embedded graphs.
\end{problem}

\begin{problem}\label{prob:components}
For a fixed pole divisor of $R$, determine the sharp upper bound for the number of bounded connected components of $\I_R$.  By \cref{prop:no-compact-without-pole}, every such component must contain at least one finite pole; the question is when this trivial bound is sharp.
\end{problem}

\begin{problem}\label{prob:monodromy}
Describe the monodromy of the covering
\[
        R':\CP^1\to\CP^1
\]
which is compatible with exactness.  In particular, determine how the zero-residue condition restricts the cyclic orders of half-edges in the real dessin $\I_R=(R')^{-1}(\RP^1)$.
\end{problem}

\section*{Acknowledgements}
The first author thank Beijing Institute for Mathematical Sciences and Applications for hospitality in Spring 2024.

\end{document}